\documentclass[a4paper,10pt,reqno]{amsart}

\usepackage[utf8]{inputenc}
\usepackage[T1]{fontenc}
\usepackage{amsthm}
\usepackage{amsmath}
\usepackage{amssymb}
\usepackage[inline]{enumitem}
\usepackage{comment}
\PassOptionsToPackage{hyphens}{url}\usepackage{hyperref}
\usepackage{fancyhdr}
\usepackage{mathrsfs}
\usepackage{stmaryrd}
\usepackage[normalem]{ulem}
\usepackage{xcolor}
\usepackage{tikz-cd}
\usetikzlibrary{arrows}
\usepackage{cite}
\setlist[description]{%
  itemsep=0.05cm,               
  font={\normalfont\textsc}, 
 leftmargin=\parindent,
 labelindent=\parindent
}

\theoremstyle{definition}

\newtheorem{theorem}{Theorem}[section]
\newtheorem{lemma}[theorem]{Lemma}
\newtheorem{proposition}[theorem]{Proposition}
\newtheorem{corollary}[theorem]{Corollary}

\theoremstyle{definition}
\newtheorem{definition}[theorem]{Definition}
\newtheorem{example}[theorem]{Example}
\newtheorem{remark}[theorem]{Remark}

\newtheorem{problem}[theorem]{Problem}

\newtheorem*{conjecture*}{Conjecture}

\newtheorem*{problem*}{Problem}
\newtheorem*{question*}{Question}

\definecolor{blue-url}{RGB}{0,0,100}
\definecolor{red-url}{RGB}{100,0,0}
\definecolor{green-url}{RGB}{0,100,0}
\definecolor{light-yellow}{RGB}{255,255,128}
\definecolor{light-blue}{RGB}{193,255,255}
\definecolor{light-red}{RGB}{239,83,80}

\hypersetup{
	pdftitle={On the strong isomorphism problem for power semigroups of inverse semigroups},
	pdfauthor={Liu and Tringali},
	pdfmenubar=false,
	pdffitwindow=true,
	pdfstartview=FitH,
	colorlinks=true,
	linkcolor=blue-url,
	citecolor=green-url,
	urlcolor=red-url
}

\renewcommand{\emptyset}{\varnothing}
\renewcommand{\setminus}{\smallsetminus}
\renewcommand{\,}{\kern 0.1em}

\newcommand{\evid}[1]{\textsf{#1}}
\newcommand{\fin}{\mathrm{fin}}

\providecommand{\lLoc}{\textrm{l-Loc}}
\providecommand{\rLoc}{\textrm{r-Loc}}
\providecommand{\Loc}{\textrm{Loc}}

\providecommand{\lFix}{\textrm{l-Fix}}

\providecommand{\EE}{\mathcal{E}}
	
{\newline\vspace{\abovedisplayskip}\hbox to \textwidth\bgroup\hss$\displaystyle}
{$\hss\egroup\vspace{\belowdisplayskip}}

\makeatletter
\DeclareFontFamily{OMX}{MnSymbolE}{}
\DeclareSymbolFont{MnLargeSymbols}{OMX}{MnSymbolE}{m}{n}
\SetSymbolFont{MnLargeSymbols}{bold}{OMX}{MnSymbolE}{b}{n}
\DeclareFontShape{OMX}{MnSymbolE}{m}{n}{
	<-6>  MnSymbolE5
	<6-7>  MnSymbolE6
	<7-8>  MnSymbolE7
	<8-9>  MnSymbolE8
	<9-10> MnSymbolE9
	<10-12> MnSymbolE10
	<12->   MnSymbolE12
}{}
\DeclareFontShape{OMX}{MnSymbolE}{b}{n}{
	<-6>  MnSymbolE-Bold5
	<6-7>  MnSymbolE-Bold6
	<7-8>  MnSymbolE-Bold7
	<8-9>  MnSymbolE-Bold8
	<9-10> MnSymbolE-Bold9
	<10-12> MnSymbolE-Bold10
	<12->   MnSymbolE-Bold12
}{}

\let\llangle\@undefined
\let\rrangle\@undefined
\DeclareMathDelimiter{\llangle}{\mathopen}%
{MnLargeSymbols}{'164}{MnLargeSymbols}{'164}
\DeclareMathDelimiter{\rrangle}{\mathclose}%
{MnLargeSymbols}{'171}{MnLargeSymbols}{'171}
\makeatother

\begin{document}
\title{Kobayashi's problem for power semigroups \\ of inverse semigroups}

\author{Shuolin Liu}
\address{(S.~Liu) School of Mathematical Sciences, Hebei Normal University | Shijiazhuang, Hebei province, 050024 China}
\email{liushuolin77@gmail.com}

\author{Salvatore Tringali}
\address{(S.~Tringali) School of Mathematical Sciences, Hebei Normal University | Shijiazhuang, Hebei province, 050024 China}
\email{salvo.tringali@gmail.com}
\urladdr{https://salvo-tringali.github.io/home/}

\subjclass[2020]{Primary 20M10, 20M18. Secondary 20M17.}
%

\keywords{Inverse semigroups, isomorphism problems, localizers, power semigroups.}

\begin{abstract}
For a semigroup $S$ (written multiplicatively), let $\mathcal P(S)$ denote the semigroup of all non-empty subsets of $S$ under setwise multiplication. The finitary power semigroup $\mathcal P_{\fin}(S)$ is the subsemigroup of $\mathcal P(S)$ consisting of all non-empty finite subsets of $S$.

Kobayashi's problem asks for which classes $\mathcal O$ of
semigroups every isomorphism $\mathcal P(H) \to \mathcal P(K)$
with $H, K \in \mathcal O$ maps singletons bijectively onto
singletons and hence induces an isomorphism $H \to K$.
We give an affirmative answer for inverse semigroups,
extending previous results for groups, semilattices, and, more generally,
Clifford semigroups. We also establish the corresponding
rigidity theorem for finitary power semigroups.
To this end, we develop a new framework that provides a common approach to both settings.
\end{abstract}

\maketitle

\thispagestyle{empty}

\section{Introduction}
\label{sec:intro}

Let $S$ be a multiplicatively written semigroup (see Section~\ref{sec:preliminaries} for notation and terminology). The non-empty
subsets of $S$ then form a semigroup under the binary operation of \evid{setwise multiplication}
\[
(X, Y) \mapsto XY:=\{xy:x\in X,\, y\in Y\}.
\]
This semigroup is denoted by $\mathcal P(S)$ and called the \evid{large power semigroup} of $S$; other names used in the literature include \evid{global} and
\evid{semigroup of complexes}. The subsemigroup
$\mathcal P_\fin(S)$ of $\mathcal P(S)$ consisting of the non-empty finite subsets of $S$ is the \evid{finitary power semigroup} of $S$. 
Note also that setwise multiplication extends naturally to the full power set of $S$ by taking $X \emptyset = \emptyset X := \emptyset$ for every $X \subseteq S$.
Throughout, however, we exclude the empty set from $\mathcal P(S)$.

Power semigroups were first systematically studied by Tamura and Shafer \cite{Tam-Sha1967}
in the late 1960s. They  serve as an intrinsic algebraic framework for many
questions lying at the intersection of arithmetic
combinatorics and (semi)group theory, while providing an interesting
benchmark for the study of (non-unique) factorization beyond the
classical commutative and cancellative setting \cite{Fa-Tr18, An-Tr18, Tr20(c), Co-Tr-25(a), Rei-2026}.
For further details on these connections, we refer the reader to the survey \cite{Tri-Survey}.

A basic question tracing back to Tamura and Shafer's work \cite{Tam-Sha1967, Shaf-1967} in the late 1960s asks to what extent the structure of a semigroup can be recovered from its large power semigroup, and a number of variants involving other power constructions have been considered over the years \cite[Section 1]{GarSan-Tri-24(a)}. 

The starting observation is that any semigroup isomorphism $\phi \colon H \to K$ induces an isomorphism $\phi^\ast \colon \allowbreak \mathcal P(H) \to \allowbreak \mathcal P(K)$, called the \evid{augmentation} of $\phi$, via the map $X \mapsto \allowbreak \phi[X]$. 
We are thus led to the following question, often referred to as the \evid{Tamura--Shafer problem} for power semigroups.

\begin{problem}
\label{ques:tamura-shafer-problem}
Determine whether a given class $\mathcal O$ of semigroups is \evid{globally determined}: for all
$H, K \in \mathcal O$, does the existence of an isomorphism $\mathcal P(H) \to \mathcal P(K)$ imply the existence of an isomorphism $H \to K$?
\end{problem}
The problem has a negative answer for the class of all semigroups \cite{Mog1973} and remains open for finite semigroups, despite claims to the contrary based on announcements made by Tamura in \cite{Tamu-1987}. For a detailed discussion of this issue, see
\begin{center}
\url{https://mathoverflow.net/questions/508548/}
\end{center}

On the positive side, Tamura proved in
\cite[Theorems~5.9 and 6.8]{Tam-1986} that
Problem~\ref{ques:tamura-shafer-problem} has an affirmative answer for completely simple and completely $0$-simple semigroups. More recently, Yu and Zhao established in
\cite[Theorem~5.4]{Yu-Zhao-2025} that the same is true for
\evid{completely regular semigroups}, that is, semigroups
each of whose elements lies in a subgroup.
All these results rely on the Generalized Continuum Hypothesis. In many other cases where the problem has an affirmative answer, a stronger rigidity property holds, motivating the following:

\begin{problem}
\label{ques:kobayashi-problem}
Determine whether a given class $\mathcal O$ of semigroups is \evid{strongly globally determined}: for all
$H, K \in \mathcal O$, does every isomorphism $\mathcal P(H) \to \mathcal P(K)$ map singletons to singletons?
\end{problem}

This is sometimes referred to as the \evid{Kobayashi problem} for power semigroups, as it was first explicitly formulated and solved in the affirmative by Kobayashi for the class of semilattices \cite[p.~218]{Koba-1984}, that is, commutative semigroups all of whose elements are idempotent. The result was generalized by Gan and Zhao \cite[Theorem 4.7]{Gan-Zhao-2014}, who proved that Problem~\ref{ques:kobayashi-problem} has a positive answer for Clifford semigroups, that is, completely regular semigroups in which every idempotent is central. 

Much less is known about the following finitary analogues of Problems~\ref{ques:tamura-shafer-problem} and \ref{ques:kobayashi-problem}, apart from the cases where the semigroups under consideration are finite and hence $
\mathcal P(S) = \mathcal P_{\mathrm{fin}}(S)$.

\begin{problem}
\label{ques:finitary-problems}
Given a class $\mathcal O$ of semigroups, prove or disprove the following statements:

\begin{enumerate}[label=\textup{(\roman*)}, leftmargin=*, itemsep=0pt]
\item\label{ques:finitary-problems(i)} For all $H, K\in\mathcal O$, an isomorphism $\mathcal P_\fin(H) \to \mathcal P_\fin(K)$ implies an isomorphism $H \to K$.

\item\label{ques:finitary-problems(ii)} For all $H, K\in\mathcal O$, every isomorphism $\mathcal P_\fin(H) \to \mathcal P_\fin(K)$
maps singletons to singletons.
\end{enumerate}
\end{problem}

The augmentation $\phi^\ast$ of an isomorphism $\phi \colon H \to K$ restricts to an isomorphism $\mathcal P_\fin(H) \to \mathcal P_\fin(K)$, since $\phi^\ast(X)$ is finite and non-empty for every $X \in \mathcal P_\fin(H)$. 
A non-trivial class for which both Problems~\ref{ques:kobayashi-problem} and \ref{ques:finitary-problems}\ref{ques:finitary-problems(ii)} have an affirmative answer is that of commutative cancellative semigroups \cite[Theorem 1]{Tri-2024(a)}, while it remains open whether the same conclusion carries over to arbitrary cancellative semigroups.

Interest in the Kobayashi problem and its finitary analogue is partly
motivated by the study of automorphisms of power semigroups. This aspect
of the theory was first considered by Byrd et al.~in a series of papers from the 1970s and 1980s \cite{Byr-Llo-Ped-Ste-1984, Byr-Llo-Ste-1982,
Byr-Llo-Men-Tel-1978, Byr-Llo-Ped-Ste-1977}
and has attracted renewed attention in recent years
\cite{Tri-Yan2025(b), Rago26(b), Tri-Wen-2026(b),
Tri-Wen-2026(a), Wong2025, Wong2026}. Related questions have been investigated in \cite{Bie-Ger-22, Tri-Yan-23(a), Tri-Yan2026(a), GarSan-Tri-24(a), Li-Tri-2024, Rago26, Rago26(c), Liu-Tri-2026, Bho-Tri-2026}. 

The present work adds to these lines of research. More precisely, recall
from \cite[Chapter~5]{Ho95} that an \evid{inverse semigroup} is a
regular semigroup whose idempotents form a semilattice, where a
semigroup $S$ is \evid{regular} if, for each $x\in S$, there exists
$y\in S$ such that $x=xyx$. We prove in
Corollaries~\ref{cor:kobayashi-theorem}
and~\ref{cor:finitary-kobayashi-for-inverse-semigroups} that
Problems~\ref{ques:kobayashi-problem}
and~\ref{ques:finitary-problems}\ref{ques:finitary-problems(ii)} both have a positive answer for
the class of inverse semigroups.

The aforementioned result of Yu and Zhao on completely regular
semigroups \cite[Theorem~5.4]{Yu-Zhao-2025} is not implied by ours,
since neither the class of inverse semigroups nor that of completely
regular semigroups contains the other. However, the intersection of
these two classes is precisely the class of Clifford semigroups, and
hence our result strictly generalizes
\cite[Theorem~4.7]{Gan-Zhao-2014}.

Perhaps more importantly, our approach appears to be new and readily adaptable to other settings. In particular, it exploits the properties
of \textit{localizers}, a tool that we introduce and investigate in
Section~\ref{sec:localizers}.

\section{Preliminaries}
\label{sec:preliminaries}

We write $|V|$ for the cardinality and $2^V$ for the power set of a set $V$. Given a function $\phi \colon A \to \allowbreak B$ and a subset $X$ of $A$, we denote by $\phi[X] := \allowbreak \{\phi(x): \allowbreak x \in X\}$ the direct image of $X$ under $\phi$. 

We refer the reader to Howie’s monograph \cite{Ho95} for the basic aspects of semigroup theory relevant to this work. Unless otherwise specified, all morphisms are understood to be semigroup homomorphisms and all semigroups are written multiplicatively. If $S$ is a semigroup, we denote by $\EE(S) := \{e \in S: e^2 = e\}$ its set of idempotents, and we say that a subfamily $\mathcal D$ of the large power semigroup $\mathcal P(S)$ of $S$ is 
\begin{itemize}
\item \evid{hereditary} if $Y \in \mathcal D$ for every \textit{non-empty} subset $Y$ of a set $X \in \mathcal D$;
\item \evid{union-closed} if $X \cup Y \in \mathcal D$ for all $X, Y \in \mathcal D$.
\end{itemize}
In particular, it is obvious that $\mathcal P(S)$ and $\mathcal P_\fin(S)$ are both hereditary and union-closed. 

Finally, we shall use without further comment that, for each element $a$ in an inverse semigroup $S$, there is a unique $b \in S$, called the \evid{inverse} of $a$ (in $S$), such that $aba = a$ and $bab = b$  \cite[Theorem 5.1.1]{Ho95}. 

Additional notation and terminology, if not explained when first used, are standard or should be clear from the context.

\section{Localizers}
\label{sec:localizers}

Below we introduce the notion of a localizer, which is central to the present work and, we believe, will prove useful well beyond its scope. The section is largely devoted to studying the properties of these objects. The term ``localizer'' is meant to emphasize that a certain existence and uniqueness condition is imposed within a prescribed subset of a semigroup $S$, rather than across all of $S$.

\begin{definition}
\label{def:localizers}
Let $S$ be a semigroup and $\vartheta$ be a function from $S$ to its power set $2^S$, and fix an element $a \in S$. The \evid{left} \evid{$\vartheta$-localizer}
of $a$ (in $S$), denoted by $
\lLoc_S(a, \vartheta)$, is the set  of all $b \in S$ such that $x = b$ is the unique solution $x \in \vartheta(b)$ to the equation
$ax = b$ (note that this condition implies $b \in \vartheta(b) \ne \emptyset$). 

The \evid{right $\vartheta$-localizer} of $a$ (in $S$), denoted by $\rLoc_S(a,\vartheta)$, is defined symmetrically, with right
multiplication by $a$ in place of left multiplication. We call the intersection $
\lLoc_S(a,\vartheta) \cap \rLoc_S(a,\vartheta)$
the \evid{two-sided $\vartheta$-localizer} of $a$ (in $S$), and we denote it by $\Loc_S(a,\vartheta)$. 
\end{definition}

We will omit 
the subscript ``$S\,$'' from the above notation when this is unlikely to cause any ambiguity. The following special cases are of particular interest.

\begin{itemize}
\item Suppose that $\vartheta(b) = \{b\}$ for every $b \in S$.
Then the left $\vartheta$-localizer of $a$ is precisely the
fixed-point set of left multiplication by $a$. In other words, we have
\[
\lLoc_S(a,\vartheta)
=
\lFix_S(a)
:=
\{b \in S : ab = b\}.
\]
Similarly, the right $\vartheta$-localizer is the fixed-point
set of right multiplication by $a$, while the two-sided $\vartheta$-localizer consists of all $b \in S$ such that $ab = ba = b$.

\item If $\vartheta$ is constant with value $\Gamma \subseteq S$, we write $\lLoc_S(a, \Gamma)$ in place of $\lLoc_S(a, \vartheta)$, 
and use analogous notation for the right and two-sided localizers. More explicitly,
\[
\lLoc_S(a,\Gamma)
=
\{b \in \lFix_S(a) \cap \Gamma : ax \ne b \text{ for every } x \in \Gamma \text{ with }x \ne b\}.
\]
In particular, we denote $\lLoc_S(a,S)$ by $\lLoc_S(a)$. Therefore, $b \in \lLoc_S(a)$ if and only if $b$ is the unique solution $x \in S$ to the equation $ax = b$. The right localizer
$\rLoc_S(a)$ and the two-sided localizer $\Loc_S(a)$ are defined similarly.
\end{itemize}

These notions are compatible with isomorphisms, as we make precise in our first proposition.

\begin{proposition}
\label{prop:isos-preserve-localizers}
Let $\phi \colon S \to T$ be a semigroup isomorphism and $\vartheta$ be a function $S \to 2^S$, and denote by $\vartheta^\phi$ the map $T \to 2^T \colon y \mapsto \phi[\vartheta \circ \phi^{-1}(y)]$. Then
\begin{equation}
\label{equ:localizers-are-preserved}
\phi[\lLoc_S(a, \vartheta)]
=
\lLoc_T(\phi(a), \vartheta^\phi),
\qquad\text{for every }
a \in S.
\end{equation}
In addition, analogous identities hold for right and two-sided localizers.
\end{proposition}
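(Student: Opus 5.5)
The plan is a direct unwinding of Definition~\ref{def:localizers}, using that $\phi$ is a bijection compatible with multiplication. Fix $a \in S$ and $b \in S$, and put $c := \phi(a)$ and $d := \phi(b)$. Since $\phi$ is a bijection $S \to T$, every element of $T$ has the form $\phi(b)$ for a unique $b \in S$. Hence \eqref{equ:localizers-are-preserved} follows once we show
\[
b \in \lLoc_S(a,\vartheta) \iff \phi(b) \in \lLoc_T(\phi(a),\vartheta^\phi).
\]

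The key observation is that $\vartheta^\phi(d) = \phi[\vartheta(\phi^{-1}(d))] = \phi[\vartheta(b)]$. Consequently $x \mapsto \phi(x)$ restricts to a bijection from $\vartheta(b)$ onto $\vartheta^\phi(d)$. Moreover, for $x \in S$ we have $ax = b$ if and only if $\phi(a)\phi(x) = \phi(b)$, that is, $c\,\phi(x) = d$; this uses that $\phi$ is a homomorphism and injective.

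It follows that $\phi$ maps the solution set $\{x \in \vartheta(b) : ax = b\}$ bijectively onto the solution set $\{y \in \vartheta^\phi(d) : cy = d\}$. The first set equals $\{b\}$ if and only if the second equals $\{\phi(b)\} = \{d\}$. This is exactly the equivalence above.

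For right localizers, the same argument applies with $xa = b$ and $y\,c = d$ in place of the left equations. The two-sided identity then follows because $\phi$ is injective, so its direct image commutes with intersections: $\phi[A \cap B] = \phi[A] \cap \phi[B]$.

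No step is a genuine obstacle. The only care needed is the bookkeeping that $\vartheta^\phi$ is evaluated at $\phi(b)$, so that it returns exactly $\phi[\vartheta(b)]$.
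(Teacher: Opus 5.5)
Your proof is correct and follows essentially the same route as the paper: both fix $b$, note that $\phi$ restricts to a bijection from $\vartheta(b)$ onto $\phi[\vartheta(b)] = \vartheta^\phi(\phi(b))$ that carries solutions of $ax = b$ to solutions of $\phi(a)y = \phi(b)$, and then conclude by the surjectivity of $\phi$. Your explicit remark that $\phi[A \cap B] = \phi[A] \cap \phi[B]$ for injective $\phi$ fills in the two-sided case, which the paper simply calls analogous.
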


\begin{proof}
We prove the statement only for left localizers, as the other cases are analogous. 

Fix $a, b \in S$. Since $\phi$ is an isomorphism, the restriction of $\phi$ to $\vartheta(b)$ induces a bijection onto the set $\phi[\vartheta(b)] = \vartheta^\phi(\phi(b))$. Moreover, for every $x \in S$ we have
\[
ax = b
\quad\Longleftrightarrow\quad
\phi(a)\phi(x) = \phi(b). 
\]
Thus $b$ is the unique solution $x \in \vartheta(b)$ to the equation $ax = b$ if and only if $\phi(b)$ is the unique solution $y \in \vartheta^\phi(\phi(b))$ to $\phi(a)y = \phi(b)$. Consequently,
\[
b \in \lLoc_S(a,\vartheta)
\quad\Longleftrightarrow\quad
\phi(b) \in \lLoc_T(\phi(a),\vartheta^\phi),
\]
and the desired equality follows from the surjectivity of $\phi$.
\end{proof}

Before proceeding, it is perhaps worth emphasizing that, while many of the results in this and subsequent sections are formulated for left localizers, their right-handed analogues follow by symmetry, and their two-sided versions are obtained by combining the corresponding one-sided statements.

\begin{proposition}
\label{prop:localizer-and-inclusion}
Let $S$ be a semigroup, $\mathcal D$ be a subsemigroup of $\mathcal P(S)$, and $\vartheta$ be a function $\mathcal D \to 2^{\mathcal D}$ each of whose values is a hereditary family. If $A \in \EE(\mathcal D)$ and $X \in \lLoc_\mathcal{D}(A, \vartheta)$, then 
\[
BY = \allowbreak Y,
\qquad\text{for all non-empty sets } 
B \subseteq A \text{ and } Y \subseteq X.
\]
\end{proposition}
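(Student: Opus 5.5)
The plan is to reduce the claim to the elementwise statement
\[
ax = x \qquad\text{for all } a \in A \text{ and } x \in X .
\]
Once this is known, for non-empty $B \subseteq A$ and $Y \subseteq X$ we get $BY = \{by : b \in B,\, y \in Y\} = \{y : y \in Y\} = Y$, because $B \ne \emptyset$. So everything rests on the elementwise identity, which I would extract from the uniqueness clause in the definition of $\lLoc_\mathcal D(A,\vartheta)$ together with heredity of $\vartheta(X)$.

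First record what membership $X \in \lLoc_\mathcal D(A,\vartheta)$ gives. We have $X \in \vartheta(X)$ and $AX = X$. Moreover, $X$ is the only $Z \in \vartheta(X)$ with $AZ = X$. Since $\vartheta(X)$ is a hereditary subfamily of $\mathcal D$ containing $X$, every non-empty $Z \subseteq X$ lies in $\vartheta(X)$ (in particular $Z \in \mathcal D$). Hence
\[
AZ \ne X \qquad\text{for every non-empty proper subset } Z \subsetneq X .
\]
From $AX = X$ we also get $Ax \subseteq X$ for each $x \in X$.

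Next I would split into two cases according to $|X|$.

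\emph{Case $X = \{x\}$.} Then $Ax \subseteq X = \{x\}$, so $ax = x$ for every $a \in A$, and we are done.

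\emph{Case $|X| \ge 2$.} Fix $x_0 \in X$ and take $Z := X \setminus \{x_0\}$, which is non-empty and proper. Since $AZ \subseteq AX = X$ and $AZ \ne X$, some element of $X$ is missing from $AZ$. Every $x \ne x_0$ satisfies $x \in AX$, but I do not directly control whether $x \in AZ$. So I would instead argue by contradiction, as follows.

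Suppose $ax = y$ with $a \in A$, $x \in X$ and $y \ne x$; note $y \in X$ because $Ax \subseteq X$. Apply the above with $x_0 := y$ and $Z := X \setminus \{y\}$. Then $x \in Z$, so $y = ax \in AZ$. Every other element $w \in X$ with $w \ne y$ is of the form $a'x'$ with $a' \in A$, $x' \in X$. Here the hard part is to show $w \in AZ$, i.e.\ that one may choose $x' \ne y$.

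To secure this, I would rather prove first that for every $y \in X$ one has $y \notin A(X \setminus \{y\})$. I would do this by considering the minimal failing configurations, or by replacing $Z$ with a suitable subset. If this fails, the fallback is to exploit $A = AA$ to move the offending factor, writing $a = a_1a_2$ and pushing $a_2x$ into $X$. With the claim $y \notin A(X\setminus\{y\})$ in hand, the identity $y = ax$ with $x \in X \setminus \{y\}$ is an immediate contradiction, which gives $ax = x$.

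I expect the main obstacle to be precisely this step: deriving from the uniqueness of the solution $X$ that each $y \in X$ is hit by $A$ only from $y$ itself. This is where the heredity hypothesis and the idempotency $A^2 = A$ have to be used in earnest.
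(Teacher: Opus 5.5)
Your reduction to the elementwise identity $ax = x$, your observation that $AZ \ne X$ for every non-empty proper subset $Z \subsetneq X$, and your singleton case are all correct. However, the proof stops exactly where the content lies. The auxiliary claim you plan to prove first, namely $y \notin A(X \setminus \{y\})$ for every $y \in X$, is equivalent (given $AX = X$) to $Ax = \{x\}$ for all $x \in X$. It is therefore the goal itself, restated. The suggestions you give for it (``minimal failing configurations'', ``a suitable subset'', ``writing $a = a_1a_2$'') are not an argument.

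Moreover, the steps you actually carry out use only heredity and uniqueness, and these cannot suffice. Take the group $S = \{1,g\}$ with $g^2 = 1$, $\mathcal D = \mathcal P(S)$, $\vartheta(Y)$ the family of all non-empty subsets of $Y$, $A = \{g\}$ (not idempotent) and $X = S$. Then $X \in \lLoc_{\mathcal D}(A,\vartheta)$, yet $gx \ne x$. So $A^2 = A$ must enter in an essential way.

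The missing idea is to first prove the weaker fixed-point property $x \in Ax$ for every $x \in X$. Put $Y_x := (X \setminus \{x\}) \cup Ax$. It is a non-empty subset of $X$, since $Ax \subseteq AX = X$, so it lies in $\vartheta(X)$. Using $A^2 = A$,
\[
AY_x = A(X\setminus\{x\}) \cup A^2x = A(X\setminus\{x\}) \cup Ax = AX = X .
\]
Uniqueness then forces $Y_x = X$, hence $x \in Ax$.

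With this in hand, your contradiction argument closes at once. Suppose $y = ax$ with $y \ne x$, and set $Z := X \setminus \{y\}$. Every $w \in Z$ satisfies $w \in Aw \subseteq AZ$, and $y = ax \in AZ$ because $x \in Z$. So $AZ = X$ with $Z \ne X$, contradicting uniqueness. This two-step structure (first $x \in Ax$ via $Y_x$, then $Ax = \{x\}$ via $X \setminus \{y\}$) is exactly how the paper proves the proposition.
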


\begin{proof}
Fix a non-empty $B \subseteq A$. It suffices to prove that $Bx = \{x\}$ for every $x \in X$, since then
\[
BY = \bigcup_{y \in Y} By
= \bigcup_{y \in Y} \{y\}
= Y,
\qquad\text{for each non-empty } Y \subseteq X.
\]

To start with, $X \in \lLoc_{\mathcal D}(A, \vartheta)$ yields
$X \in \vartheta(X)$ and $AX = X$. We claim that
\begin{equation}
\label{lem:localizer-and-inclusion:eq(2)}
x \in Ax, \qquad \text{for all } x \in X.
\end{equation}
Indeed, let $x \in X$. Considering that $Ax \subseteq AX = X$, the set
\begin{equation}
\label{lem:localizer-and-inclusion:eq(3)}
Y_x := (X \setminus \{x\}) \cup Ax
\end{equation}
is a non-empty subset of $X$ and hence belongs to $\vartheta(X)$, because $X$ is in $\vartheta(X)$ and the latter is, by hypothesis, a hereditary subfamily of $\mathcal D$. Moreover, $A$ being an idempotent of $\mathcal D$ implies
\[
X = AX \supseteq AY_x
=
A(X \setminus \{x\}) \cup A^2x
=
A(X \setminus \{x\})\cup Ax \supseteq (AX \setminus Ax) \cup Ax =
AX =
X,
\]
that is, $AY_x = X$. Thus, (the uniqueness property inherent to) the definition of
$\lLoc_{\mathcal D}(A,\vartheta)$ forces $Y_x = \allowbreak X$, which,
by Eq.~\eqref{lem:localizer-and-inclusion:eq(3)}, implies in turn that
$x \in Ax$, as claimed.

Now suppose, by way of contradiction, that there exist $x \in X$ and $y \in Ax$ with
$y \ne x$. Since $Ax \subseteq \allowbreak AX = \allowbreak X$, it is then clear that $Z := X \setminus \{y\}$
is a proper subset of $X$ containing $x$. By the hereditariness of $\vartheta(X)$, it follows that $X \ne Z \in \vartheta(X)$. On the other hand, we are guaranteed by Eq.~\eqref{lem:localizer-and-inclusion:eq(2)} that 
\[
X \setminus \{y\} = Z \subseteq AZ \subseteq AX = X
\]
while $x \in Z$ and $y \in Ax$ give $y \in AZ$. This shows that
\[
X = Z \cup \{y\} \subseteq AZ \subseteq X.
\]
That is, we have $AZ = X$, contradicting the definition of $\lLoc_\mathcal{D}(A, \vartheta)$ and proving that $Ax = \{x\}$. 

Accordingly, for every $x \in X$, we find that $\emptyset \ne Bx \subseteq Ax = \{x\}$ and hence $Bx = \{x\}$, as desired.
\end{proof}

\begin{remark}
\label{remark:two-gauge-functions}
Let $S$ be a semigroup, and let $\vartheta$ be a function from $S$ to its power set. Given $a, x \in S$, it follows immediately from
Definition~\ref{def:localizers} that 
\begin{equation}
\label{remark:two-gauge-functions:eq(1)}
x \in \lLoc_S(a,\vartheta) 
\quad\Longleftrightarrow\quad
x \in \lLoc_S(a,\vartheta(x)).
\end{equation}
Moreover, if $\lambda$ is another function $S \to 2^S$, then
\begin{equation}
\label{remark:two-gauge-functions:eq(2)}
x \in \vartheta(x) \subseteq \lambda(x) \text{ and } x \in \lLoc_S(a,\lambda)
\quad\Longrightarrow\quad
x \in \lLoc_S(a,\vartheta).
\end{equation}

Together with Proposition~\ref{prop:localizer-and-inclusion}, these observations
provide a useful strategy for proving
that, under suitable hypotheses, an isomorphism between power semigroups maps one-element idempotents to one-element idempotents. We will see an application to inverse semigroups in the proof of
Theorem~\ref{thm:finitary-kobayashi-theorem}. 
\end{remark}

Now we strengthen Proposition~\ref{prop:localizer-and-inclusion} by giving sufficient conditions for localizers to be hereditary.

\begin{proposition}
\label{lem:localizer-and-inclusion-hereditary}
Let $S$ be a semigroup, $\mathcal D$ be a subsemigroup of $\mathcal P(S)$, and $\vartheta$ be a function $\mathcal D \to 2^\mathcal{D}$ each of whose values is a hereditary, union-closed family. Assume further that $X \in \vartheta(X)$ for all $X \in \mathcal D$, and that $\vartheta$ is monotone under inclusion (in the sense that $\vartheta(Y) \subseteq \vartheta(X)$ whenever $X, Y \in \mathcal D$ and $Y \subseteq X$). Then $\lLoc_{\mathcal D}(A, \vartheta)$ is a hereditary subfamily of $\mathcal D$ for every $A \in \EE(\mathcal D)$.
\end{proposition}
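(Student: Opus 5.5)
Fix $A \in \EE(\mathcal D)$, $X \in \lLoc_{\mathcal D}(A,\vartheta)$, and a non-empty $Y \subseteq X$. We must show $Y \in \lLoc_{\mathcal D}(A,\vartheta)$, that is: $Y \in \vartheta(Y)$, $AY = Y$, and $Y$ is the only $Z \in \vartheta(Y)$ with $AZ = Y$.

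The first two conditions are immediate. First, $Y \in \mathcal D$ because $X \in \vartheta(X) \subseteq \mathcal D$ and $\vartheta(X)$ is hereditary. The hypothesis that $Z \in \vartheta(Z)$ for all $Z \in \mathcal D$ then gives $Y \in \vartheta(Y)$. Second, Proposition~\ref{prop:localizer-and-inclusion} (with $B = A$) gives $AY = Y$; in fact it gives $Ax = \{x\}$ for every $x \in X$.

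For uniqueness, let $Z \in \vartheta(Y)$ with $AZ = Y$. The plan is to glue $Z$ with the rest of $X$ and then invoke uniqueness at $X$. Put
\[
W := Z \cup (X \setminus Y) \quad\text{if } X \ne Y,
\]
and $W := Z$ if $X = Y$ (in which case uniqueness at $X$ applies directly).

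First we check $W \in \vartheta(X)$. By monotonicity, $Z \in \vartheta(Y) \subseteq \vartheta(X)$. Also $X \setminus Y$ is a non-empty subset of $X \in \vartheta(X)$, so it lies in $\vartheta(X)$ by hereditariness. Union-closedness then puts $W$ in $\vartheta(X)$.

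Next we compute $AW$. Using $Ax = \{x\}$ for $x \in X$,
\[
AW = AZ \cup A(X \setminus Y) = Y \cup (X \setminus Y) = X.
\]
The uniqueness clause in $X \in \lLoc_{\mathcal D}(A,\vartheta)$ now forces $W = X$.

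The main obstacle is that $W = X$ does not by itself give $Z = Y$. It yields $Y \subseteq Z$, since $Y \subseteq X = W$ and $Y$ is disjoint from $X \setminus Y$. But it also yields $Z \subseteq X$, and $Z$ could still contain points of $X \setminus Y$. To exclude such points, take $z \in Z \cap (X \setminus Y)$. Since $z \in X$, we have $Az = \{z\}$, so $z \in AZ = Y$, a contradiction. Hence $Z \subseteq Y$, and therefore $Z = Y$, which finishes the proof.
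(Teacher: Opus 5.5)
Your proof is correct and follows essentially the same route as the paper: monotonicity and union-closedness put a union of $Z$ with (part of) $X$ into $\vartheta(X)$, and Proposition~\ref{prop:localizer-and-inclusion} makes $A$ act trivially on subsets of $X$. Uniqueness at $X$ then forces $Z \subseteq X$, whence $Z = AZ = Y$. The paper glues $Z$ with all of $X$ rather than with $X \setminus Y$, which avoids your separate case $X = Y$; and once $Z \subseteq X$, it concludes directly from $AZ = Z$ instead of excluding points of $Z \cap (X \setminus Y)$ by hand.
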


\begin{proof}
Let $X \in \lLoc_{\mathcal D}(A,\vartheta)$, and fix a
non-empty set $Y \subseteq X$. Since $\vartheta(X)$ contains $X$ (by the definition of localizers) and $\vartheta(X)$ is hereditary (by the hypotheses on $\vartheta$), we have $Y \in \vartheta(X) \subseteq \mathcal D$. Therefore, $\vartheta(Y)$ contains $Y$ (again by hypothesis), while Proposition~\ref{prop:localizer-and-inclusion} yields $AY = Y$. It remains to see that 
\[
AZ \ne Y,
\qquad\text{for every }
Z \in \vartheta(Y) \text{ with } Z \ne Y.
\]

Assume to the contrary that there exists
$Z \in \vartheta(Y)$ such that $AZ = Y \ne Z$.
If $Z \subseteq X$, then
$AZ = Z$ by Proposition~\ref{prop:localizer-and-inclusion}, and hence $Z = Y$, which is absurd. Thus $Z \nsubseteq X$, and hence 
\begin{equation}
\label{lem:localizer-and-inclusion-hereditary:eq(1)}
A(X \cup Z) = AX \cup AZ = X \cup Y = X \ne X \cup Z.
\end{equation}

Now, $Y \subseteq X$ implies $\vartheta(Y) \subseteq \vartheta(X)$ and hence $Z \in \vartheta(X)$, by the monotonicity of $\vartheta$. Accordingly, $\vartheta(X)$ being union-closed (yet again by hypothesis) yields $X \cup Z \in \vartheta(X)$. Together with Eq.~\eqref{lem:localizer-and-inclusion-hereditary:eq(1)}, this is however in contradiction with the assumption that
$X \in \lLoc_{\mathcal D}(A, \vartheta)$, thereby completing the proof.
\end{proof}

\begin{corollary}
\label{cor:localizer-and-inclusion-hereditary}
Let $S$ be a semigroup and $\mathcal D$ be a hereditary, union-closed subsemigroup of $\mathcal P(S)$. Then
$\lLoc_{\mathcal D}(A)$ is a hereditary subfamily of $\mathcal D$ for every $A \in \EE(\mathcal D)$.
\end{corollary}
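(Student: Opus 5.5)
This is a direct specialization of Proposition~\ref{lem:localizer-and-inclusion-hereditary}. I would take $\vartheta$ to be the constant function $\mathcal D \to 2^{\mathcal D}$ with value $\mathcal D$. By the notational convention introduced after Definition~\ref{def:localizers}, we then have $\lLoc_{\mathcal D}(A,\vartheta) = \lLoc_{\mathcal D}(A,\mathcal D) = \lLoc_{\mathcal D}(A)$ for every $A \in \mathcal D$. It therefore suffices to check that this constant $\vartheta$ satisfies all the hypotheses of Proposition~\ref{lem:localizer-and-inclusion-hereditary}.

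The checks are routine and go as follows.
\begin{enumerate}[label=\textup{(\roman*)}]
\item Each value $\vartheta(X) = \mathcal D$ is a hereditary, union-closed family, precisely because $\mathcal D$ is assumed hereditary and union-closed.
\item The condition $X \in \vartheta(X)$ holds trivially for all $X \in \mathcal D$, since $\vartheta(X) = \mathcal D$.
\item Monotonicity holds because $\vartheta(Y) = \mathcal D = \vartheta(X)$ for all $X, Y \in \mathcal D$; in particular this holds whenever $Y \subseteq X$.
\end{enumerate}

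Proposition~\ref{lem:localizer-and-inclusion-hereditary} then gives that $\lLoc_{\mathcal D}(A)$ is a hereditary subfamily of $\mathcal D$ for every $A \in \EE(\mathcal D)$, as claimed.

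There is no genuine obstacle here. The only point needing care is the bookkeeping that the localizer with constant gauge $\mathcal D$, computed inside the semigroup $\mathcal D$, is exactly what the notation $\lLoc_{\mathcal D}(A)$ denotes, namely $\lLoc_{\mathcal D}(A,\mathcal D)$. By symmetry, the same argument also yields the right and two-sided versions.
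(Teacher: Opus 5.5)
Your proposal is correct and matches the paper's approach: the paper's proof just says the corollary is straightforward from Proposition~\ref{lem:localizer-and-inclusion-hereditary} and the definitions. Your choice of the constant gauge $\vartheta \equiv \mathcal D$, the identification $\lLoc_{\mathcal D}(A) = \lLoc_{\mathcal D}(A,\mathcal D)$, and your three routine checks supply exactly the details the paper leaves to the reader.
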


\begin{proof}
Straightforward from
Proposition~\ref{lem:localizer-and-inclusion-hereditary}
and our definitions (the details are left to the reader).
\end{proof}

The following examples show that the hypotheses of Proposition~\ref{lem:localizer-and-inclusion-hereditary} cannot be dropped altogether.

\begin{example}
Let $\Omega := \{0, 1, 2\} \subseteq \mathbb N$, and denote by $T_\Omega$ the full transformation monoid on $\Omega$, that is, the monoid of all self-maps on $\Omega$ under composition. Define
\[
\alpha :=
\begin{pmatrix}
0 & 1 & 2\\
0 & 1 & 0
\end{pmatrix}
\in T_\Omega,
\]
and for each $i \in \Omega$ let $c_i$ be the constant map on $\Omega$ with value $i$. Set
\[
S := \{\alpha, c_0, c_1, c_2\}
\quad\text{and}\quad
\mathcal D :=
\bigl\{
\{\alpha\}, \{c_0\}, \{c_1\}, \{c_2\}, \{c_0, c_1\}
\bigr\}.
\]
Note that $S$ is a subsemigroup of $T_\Omega$ and $\mathcal D$ is a hereditary subsemigroup of $\mathcal P(S)$; in particular, 
\begin{equation}
\label{equ:full-transformation-monoid}
\alpha \circ c_0 = \alpha \circ c_2 = c_0
\quad\text{and}\quad
\alpha \circ c_1 = c_1. 
\end{equation}
However, $\mathcal D$ is not union-closed, since
\[
\{c_0, c_1\} \cup \{c_2\}
=
\{c_0, c_1, c_2\}
\notin\mathcal D.
\]

Now, let $A := \{\alpha\}$ and $X := \{c_0, c_1\}$. It is immediate from Eq.~\eqref{equ:full-transformation-monoid} that $AX = X$; moreover,
\[
AZ \ne X,
\qquad\text{for every } Z \in \mathcal D \text{ with } Z \ne X.
\]
Therefore, $X \in \lLoc_{\mathcal D}(A)$. On the other hand, letting $Y := \{c_0\}$ yields
\[
AY=Y
\quad\text{and}\quad
A\{c_2\} = \{c_0\} = Y.
\]
Since $\{c_2\} \ne Y \subsetneq X$, this shows that
$Y \notin \lLoc_{\mathcal D}(A)$, and hence
$\lLoc_{\mathcal D}(A)$ is not hereditary. 
It is thus clear that union-closedness plays a somewhat critical role in ensuring that localizers are hereditary.
\end{example}

\begin{example}
Let $S$ be the semilattice obtained by endowing $\{0, 1, 2\} \subseteq \mathbb N$ with the binary operation $(a, b) \mapsto \min(a, b)$ relative to the usual ordering of the non-negative integers, and let $\mathcal D := \mathcal P(S)$.

Each non-empty subset $Y$ of $S$ satisfies $Y^2 = Y$; in particular, $Y$ is a subsemigroup of $S$, and hence we may consider its large power semigroup $\mathcal P(Y)$.
Accordingly, let $\vartheta$ be the function
\[
\mathcal D\to 2^{\mathcal D} \colon Y \mapsto 
\begin{cases}
\mathcal P(\{1, 2\}), & \text{if } Y = \{1\},\\
\mathcal P(Y), & \text{otherwise}.
\end{cases}
\]
Each value of $\vartheta$ is a hereditary, union-closed
family, and $Y \in \vartheta(Y)$ for all $Y \in \mathcal D$.

Now put $A := \{1\}$ and $X := \{0,1\}$. Then
$A \in \EE(\mathcal D)$, and $AY = Y$ for every non-empty $Y \subseteq X$. Since $\vartheta(X) = \mathcal P(X)$,
this implies $X \in \lLoc_{\mathcal D}(A,\vartheta)$. On the other hand, $A$ is a non-empty subset of $X$, but 
\[
\{2\} \in \vartheta(A)
\quad\text{and}\quad
A\{2\} = A \ne \{2\}.
\]
It follows that $A \notin \lLoc_{\mathcal D}(A,\vartheta)$, and hence $\lLoc_{\mathcal D}(A,\vartheta)$ is not hereditary. In fact, 
Proposition~\ref{lem:localizer-and-inclusion-hereditary}
does not apply here, because $\vartheta$ is not monotone
under inclusion: $A \subseteq X$, but
$\{2\} \in \vartheta(A) \setminus \vartheta(X)$.
\end{example}

For use in the next results, we recall that a \evid{right-zero semigroup} is a semigroup $S$ such that $xy = y$ for all $x, y \in S$. Left-zero semigroups are defined symmetrically. 
 
\begin{proposition}
\label{prop:key-property-of-left-localizers}
Let $S$, $\mathcal D$, and $\vartheta$ be  as in Proposition \ref{lem:localizer-and-inclusion-hereditary}. If $A \in \EE(\mathcal D)$ and $E \in \mathcal D$ are such that
\begin{equation}
\label{prop:key-property-of-left-localizers:eq(1)}
E^2 \subseteq E
\quad\text{and}\quad
E \in \lLoc_{\mathcal D}(A, \vartheta) \subseteq E\mathcal D,
\end{equation}
then $E$ is a right-zero subsemigroup of $S$, and $E\mathcal D = \{e\}\mathcal D$ for every $e \in E$. 
In particular, $E \subseteq \EE(S)$.
\end{proposition}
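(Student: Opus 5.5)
The plan is to use the hereditariness of $\lLoc_{\mathcal D}(A,\vartheta)$ from Proposition~\ref{lem:localizer-and-inclusion-hereditary} together with the inclusion $\lLoc_{\mathcal D}(A,\vartheta) \subseteq E\mathcal D$. Since $E \in \lLoc_{\mathcal D}(A,\vartheta)$ and the localizer is hereditary, every singleton $\{e\}$ with $e \in E$ lies in $\lLoc_{\mathcal D}(A,\vartheta)$. Moreover, every non-empty subset of $E$ lies there as well, and in particular belongs to $\mathcal D$. By hypothesis, then, $\{e\} \in E\mathcal D$ for each $e\in E$. That is, there is $Y_e \in \mathcal D$ with $EY_e = \{e\}$.

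The first step is to show that $E\mathcal D = \{f\}\mathcal D$ for all $f \in E$. Fix $e, f \in E$. From $EY_e = \{e\}$ and $f \in E$ we get $fY_e = \{e\}$, and since $Y_e$ is non-empty, $fy = e$ for some (indeed every) $y\in Y_e$. Hence $\{e\} = \{f\}Y_e \in \{f\}\mathcal D$, and consequently $\{e\}\mathcal D \subseteq \{f\}\mathcal D$, because $\{f\}Y_e\,\mathcal D \subseteq \{f\}\mathcal D$ as $\mathcal D$ is a semigroup. By symmetry, $\{e\}\mathcal D = \{f\}\mathcal D$.

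Next, $E\mathcal D = \bigcup$-type arguments give the comparison with $E\mathcal D$. For $Z \in \mathcal D$ we have $EZ = \bigcup_{e\in E} eZ$. Conversely, $\{e\}\mathcal D \ni \{e\} = EY_e$, so $\{e\}Z = EY_eZ \in E\mathcal D$, whence $\{e\}\mathcal D \subseteq E\mathcal D$. For the reverse inclusion, I will show that $E$ is right-zero, so that $EZ = E\{e\}Z$. Indeed, if $E$ is right-zero, then $e'e = e$ for all $e' \in E$, so $E\{e\} = \{e\}$ and $EZ$ would need to equal $\{e\}Z$. I must be careful here: right-zero gives $E\{e\}=\{e\}$, not $\{e\}E=\{e\}$. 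The cleaner route is $EZ = E(EZ)$? I'll instead use $E = EY_e\,\cdot$ something.

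So the key step, and the main obstacle, is right-zeroness. Since $\{e\} \in \lLoc_{\mathcal D}(A,\vartheta)$, Proposition~\ref{prop:localizer-and-inclusion} gives $A e = \{e\}$. The more useful fact is $E\{e\}$: from $E^2\subseteq E$ we have $Ee \subseteq E$. Now $\{e\} = EY_e$ gives $E\{e\} = E\,EY_e = E^2Y_e$. Because $E^2 \subseteq E$ is non-empty, $E^2Y_e \subseteq EY_e = \{e\}$, so $E\{e\} = \{e\}$, i.e.\ $e'e = e$ for all $e'\in E$. This is exactly right-zeroness, and in particular $E \subseteq \EE(S)$ (take $e' = e$).

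With right-zeroness in hand, the comparison with $E\mathcal D$ follows. We have $\{e\} = EY_e$, so $\{e\}\mathcal D\subseteq E\mathcal D$. Conversely, $EZ = E\{e\}\cdots$ fails directly, but instead write $E = E^2$? We use $E = \bigcup_f \{f\}$ and $\{f\} = E Y_f$, which gives $E = \bigcup$; this is not obviously of product form. The alternative is to use $EZ \supseteq$. The approach I would try is $E\{e\}=\{e\}$ combined with the fact that $E\subseteq \{e\}\cdot$? For right-zero $E$, we have $e E = E$ (since $ef = f$), so $E = \{e\}E$ and $EZ = \{e\}(EZ) \in \{e\}\mathcal D$, provided $E \in \mathcal D$, which holds by hypothesis. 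This closes the argument.
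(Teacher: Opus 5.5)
Your proof is correct and follows essentially the same route as the paper. Hereditariness of $\lLoc_{\mathcal D}(A,\vartheta)$ gives $\{e\} = EY_e$; then $E\{e\} = E^2Y_e \subseteq \{e\}$ yields right-zeroness, and $eE = E$ gives $EZ = \{e\}(EZ)$, while your converse inclusion via $\{e\}Z = E(Y_eZ)$ is only a minor variant of the paper's $eY = E(eY)$. The preliminary step proving $\{e\}\mathcal D = \{f\}\mathcal D$ and the false starts in your third and fifth paragraphs are unnecessary and should be cut from a final write-up.
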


\begin{proof}
Since $A \in \mathcal D$ is idempotent and $\vartheta$ is a monotone function $\mathcal D \to 2^{\mathcal D}$ such that $\vartheta(X)$ is a hereditary, union-closed family containing $X$ for every $X \in \mathcal D$, we gather from Proposition~\ref{lem:localizer-and-inclusion-hereditary} that $\lLoc_{\mathcal D}(A, \vartheta)$ is itself a hereditary subfamily of $\mathcal D$. Therefore, Eq.~\eqref{prop:key-property-of-left-localizers:eq(1)} yields
\begin{equation}
\label{prop:key-property-of-left-localizers(2)}
\{y\} \in \lLoc_{\mathcal D}(A, \vartheta) \subseteq E\mathcal D,
\qquad\text{for all } y \in E.
\end{equation}

In turn, this implies that, for each $y \in E$, there exists $X_y \in \mathcal D$ such that $\{y\} = E X_y$, and hence
\[
\emptyset \ne Ey = E(EX_y) = E^2X_y \subseteq EX_y = \{y\},
\]
where we used that $E^2 \subseteq E$ (by hypothesis). In other words, we have
\begin{equation}
\label{equ:right-zero-subsgrp}
xy = y,
\qquad\text{for all } x, y \in E.
\end{equation}
So, $E$ is a right-zero subsemigroup of $S$, and each of its elements is an idempotent of $S$.

Now fix $e \in E$. By
Eq.~\eqref{equ:right-zero-subsgrp}, we have $eE = E$ and $Ee = \{e\}$. Given any $X \in \mathcal D$, it follows that 
\[
EX = (eE)X = \allowbreak e(EX) \in \{e\} \mathcal D,
\]
and thus $E\mathcal D \subseteq \{e\}\mathcal D$.
Conversely, we know from Eq.~\eqref{prop:key-property-of-left-localizers(2)} that $\{e\} \in \mathcal D$. Hence, if $Y \in \allowbreak \mathcal D$, then 
\[
eY \in \mathcal D
\quad\text{and}\quad
eY = (Ee)Y = E(eY) \in E\mathcal D,
\]
which proves that $\{e\}\mathcal D\subseteq E\mathcal D$ and completes the proof.
\end{proof}

\begin{corollary}
\label{cor:key-property-of-left-localizers}
Let $S$ be a semigroup and $\mathcal D$ be a hereditary, union-closed subsemigroup of $\mathcal P(S)$. Assume that there exist $E \in \mathcal D$ and $A \in \mathcal E(\mathcal D)$ such that $
E^2 \subseteq E$ and $
E \in \lLoc_{\mathcal D}(A) \subseteq E\mathcal D$.
Then $E$ is a right-zero sub\-semigroup of $S$, and $E\mathcal D = \{e\} \mathcal D$ for every $e \in E$. 
In particular, $E \subseteq \EE(S)$.
\end{corollary}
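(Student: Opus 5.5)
The plan is to derive the corollary from Proposition~\ref{prop:key-property-of-left-localizers}, applied with the constant function $\vartheta \colon \mathcal D \to 2^{\mathcal D}$, $X \mapsto \mathcal D$. By the conventions of Section~\ref{sec:localizers}, we then have $\lLoc_{\mathcal D}(A,\vartheta) = \lLoc_{\mathcal D}(A,\mathcal D) = \lLoc_{\mathcal D}(A)$. Under this identification, the hypotheses $E^2 \subseteq E$ and $E \in \lLoc_{\mathcal D}(A) \subseteq E\mathcal D$ are exactly Eq.~\eqref{prop:key-property-of-left-localizers:eq(1)}.

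It then remains to check that this $\vartheta$ meets the standing assumptions of Proposition~\ref{lem:localizer-and-inclusion-hereditary}, which Proposition~\ref{prop:key-property-of-left-localizers} inherits. There are three routine checks.
\begin{enumerate*}[label=(\roman*)]
\item Each value of $\vartheta$ is $\mathcal D$ itself, which is hereditary and union-closed by hypothesis.
\item For every $X \in \mathcal D$ we have $X \in \mathcal D = \vartheta(X)$.
\item $\vartheta$ is monotone under inclusion, trivially, because it is constant: $\vartheta(Y) = \mathcal D = \vartheta(X)$ whenever $Y \subseteq X$.
\end{enumerate*}

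Also, $A \in \EE(\mathcal D)$ and $E \in \mathcal D$ hold by hypothesis. Proposition~\ref{prop:key-property-of-left-localizers} therefore gives that $E$ is a right-zero subsemigroup of $S$, that $E\mathcal D = \{e\}\mathcal D$ for all $e \in E$, and that $E \subseteq \EE(S)$.

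I expect no real obstacle here. The only point needing care is the notational identification of $\lLoc_{\mathcal D}(A)$ with the $\vartheta$-localizer for the constant function with value $\mathcal D$. This follows from the explicit description given after Definition~\ref{def:localizers} (with $\mathcal D$ playing the role of $S$ and $\Gamma = \mathcal D$).
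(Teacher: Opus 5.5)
Your proposal is correct and is exactly the argument the paper has in mind: specialize Proposition~\ref{prop:key-property-of-left-localizers} to the constant function $\vartheta \equiv \mathcal D$, for which $\lLoc_{\mathcal D}(A,\vartheta) = \lLoc_{\mathcal D}(A)$. The three checks you list (hereditary/union-closed values, $X \in \vartheta(X)$, monotonicity) are precisely the details the paper leaves to the reader.
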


\begin{proof}
Straightforward from Proposition~\ref{prop:key-property-of-left-localizers} and our definitions (we leave the details to the reader).
\end{proof}

The following two-sided analogue of Proposition \ref{prop:key-property-of-left-localizers} allows for stronger conclusions.

\begin{proposition}
\label{prop:key-property-of-localizers}
Let $S$, $\mathcal D$, and $\vartheta$ be as in Proposition \ref{lem:localizer-and-inclusion-hereditary}. If $A \in \mathcal E(\mathcal D)$ and $E \in \mathcal D$ are such that
\begin{equation}
\label{prop:key-property-of-localizers:eq(1)}
E^2 \subseteq E
\quad\text{and}\quad
E \in \Loc_{\mathcal D}(A, \vartheta) \subseteq \allowbreak E \mathcal D E,
\end{equation}
then $E = \{e\}$ for some $e \in \EE(S)$.
\end{proposition}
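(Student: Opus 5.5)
The plan is to run the argument of Proposition~\ref{prop:key-property-of-left-localizers} on both sides and then combine the two conclusions.

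\textbf{Heredity on both sides.} By Proposition~\ref{lem:localizer-and-inclusion-hereditary} and its right-handed analogue, both $\lLoc_{\mathcal D}(A,\vartheta)$ and $\rLoc_{\mathcal D}(A,\vartheta)$ are hereditary. Hence their intersection $\Loc_{\mathcal D}(A,\vartheta)$ is hereditary too. Since $E \in \Loc_{\mathcal D}(A,\vartheta)$, we get
\[
\{y\} \in \Loc_{\mathcal D}(A,\vartheta) \subseteq E\mathcal D E \qquad \text{for every } y \in E.
\]

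\textbf{Writing singletons through $E$.} Fix $y \in E$. The previous step gives some $X_y \in \mathcal D$ with $\{y\} = EX_yE$. Using $E^2 \subseteq E$, I compute
\[
\emptyset \ne EyE = E(EX_yE)E = E^2X_yE^2 \subseteq EX_yE = \{y\}.
\]
So $xyz = y$ for all $x,z \in E$.

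\textbf{Collapsing $E$ to one element.} Take $x = z = y$ in the identity above. This gives $y^3 = y$, so $y^2$ is idempotent. Now take any $y, u \in E$.
\begin{itemize}
\item Since $yu \in E^2 \subseteq E$, applying the identity to the element $yu$ with $x = y$, $z = u$ gives $y(yu)u = yu$.
\item Applying the identity to the element $u$ with outer factors $y$ and $u$ gives $yuu = u$.
\item Applying it to the element $y$ with outer factors $y$ and $u$ gives $yyu = y$.
\end{itemize}
Then
\[
yu = (yyu)u = y\,(u),
\]
where the second equality uses the second bullet, $y(uu) = u$. Also $yu = y(yu)u$, and $y(yu)u = (yyu)u = yu$, which is consistent.

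The main obstacle is extracting $y = u$ from these relations, and I expect some bookkeeping here. I would try the following chain. From the identity with $x = u$, $z = u$ applied to $y$, we get $uyu = y$. From it with $x = u$, $z = u$ applied to $u$, we get $u^3 = u$. Then
\[
y = uyu = u(yuu) = u\cdot u = u^2 .
\]
Here $yuu = u$ is the second bullet. Symmetrically, $u = yuy = y(uyy)$, and $uyy = y$ by the symmetric instance, so $u = y \cdot y = y^2$.

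Combining these, $y = u^2 = (y^2)^2 = y^4 = y^3\cdot y = y\cdot y = y^2$, using $y^3 = y$. Then $u = y^2 = y$. Hence $E = \{e\}$ with $e = e^2 \in \EE(S)$.
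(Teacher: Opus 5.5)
Your first two steps are correct: $\Loc_{\mathcal D}(A,\vartheta)$ is hereditary, so $\{y\} = EX_yE$ for each $y \in E$. Multiplying by $E$ on both sides then gives $xyz = y$ for all $x,y,z \in E$.

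\textbf{The gap is in the collapsing step.} The three bullets only produce the tautology $yu = yu$. The decisive chain $y = uyu = u(yuu) = u^2$ contains an unjustified equality. Since $u(yuu) = (uyu)u = yu$, the step $uyu = u(yuu)$ is literally the assertion $y = yu$. That is a one-sided absorption law you have not derived; the second bullet only evaluates $u(yuu)$ \emph{after} the extra factor $u$ has been inserted. Likewise, $u = yuy = y(uyy)$ silently assumes $u = uy$. So $y = u^2$ and $u = y^2$ are not established, and your final computation rests entirely on them.

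\textbf{How to repair it.} The identity $xyz = y$ does force $|E| = 1$, but you have to apply it to products. For $x,y,u,z \in E$ you have $yu \in E$, so
\[
yu = u(yu)z = (uyu)z = yz \quad\text{and}\quad yu = x(yu)u = x(yuu) = xu.
\]
Hence all products of two elements of $E$ coincide with a single $c \in E$. Then $y = y^3 = (yy)y = c$ for every $y \in E$.

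\textbf{The paper's shorter route.} It multiplies $\{y\} = EX_yE$ by $E$ on one side only. This gives $E\{y\} = E^2X_yE \subseteq EX_yE = \{y\}$, i.e.\ $xy = y$ for all $x,y \in E$, so $E$ is right-zero. The dual computation gives $yx = y$, and then $x = xy = y$. Multiplying on both sides is exactly what discarded the one-sided absorption you later needed.
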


\begin{proof}
By definition, we have $\Loc_\mathcal{D}(A, \vartheta) = \lLoc_\mathcal{D}(A, \vartheta) \cap \rLoc_\mathcal{D}(A, \vartheta)$. Since any intersection of he\-red\-i\-ta\-ry families is he\-red\-i\-tary, it follows from Proposition~\ref{lem:localizer-and-inclusion-hereditary}, its left-right dual, and Eq.~\eqref{prop:key-property-of-localizers:eq(1)} that
\[
\{y\} \in \Loc_{\mathcal D}(A, \vartheta) \subseteq E\mathcal D E,
\qquad\text{for all } y \in E.
\]
Consequently, for each $y \in E$, there is a set $X_y \in \mathcal D$ such that $\{y\} = E X_y E$, and hence
\[
\emptyset \ne Ey = E(EX_yE) \subseteq E X_y E = \{y\},
\]
where we used that $E^2 \subseteq E$ (by hypothesis).
So $E$ is a right-zero semigroup, and a symmetric argument guarantees that it is also left-zero. However, a non-empty semigroup is both right-zero and left-zero if and only if it is trivial. Therefore, $E = \{e\}$ for some $e \in \EE(S)$.
\end{proof}

\begin{corollary}
\label{cor:key-property-of-localizers}
Let $S$ be a semigroup and $\mathcal D$ be a hereditary, union-closed subsemigroup of $\mathcal P(S)$. Fix $A \in \mathcal E(\mathcal D)$ and $E \in \mathcal D$. If $
E^2 \subseteq E$ and $
E \in \Loc_{\mathcal D}(A) \subseteq E\mathcal D E$, then $E = \{e\}$ for some $e \in \EE(S)$.
\end{corollary}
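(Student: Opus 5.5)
The plan is to derive Corollary~\ref{cor:key-property-of-localizers} directly from Proposition~\ref{prop:key-property-of-localizers}, specializing the gauge function to the constant map. Let $\vartheta \colon \mathcal D \to 2^{\mathcal D}$ be constant with value $\mathcal D$. Then, by the notational conventions following Definition~\ref{def:localizers}, we have $\Loc_{\mathcal D}(A,\vartheta) = \Loc_{\mathcal D}(A,\mathcal D) = \Loc_{\mathcal D}(A)$. Consequently, the hypothesis $E \in \Loc_{\mathcal D}(A) \subseteq E\mathcal D E$ is exactly the condition in Eq.~\eqref{prop:key-property-of-localizers:eq(1)} for this choice of $\vartheta$, and $E^2 \subseteq E$ is assumed.

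It remains to check that this constant $\vartheta$ meets the standing hypotheses of Proposition~\ref{lem:localizer-and-inclusion-hereditary}, which Proposition~\ref{prop:key-property-of-localizers} inherits. There are three conditions.
\begin{enumerate*}[label=\textup{(\roman*)}]
\item Each value $\vartheta(X) = \mathcal D$ must be hereditary and union-closed; this is precisely our assumption on $\mathcal D$.
\item We need $X \in \vartheta(X)$ for every $X \in \mathcal D$, which is trivial.
\item $\vartheta$ must be monotone under inclusion; this holds because $\vartheta$ is constant, so $\vartheta(Y) = \vartheta(X)$ for all $X,Y$.
\end{enumerate*}
Since $A \in \EE(\mathcal D)$ and $E \in \mathcal D$, Proposition~\ref{prop:key-property-of-localizers} then applies and gives $E = \{e\}$ for some $e \in \EE(S)$.

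No step is a genuine obstacle. The only point requiring care is the identification of $\Loc_{\mathcal D}(A)$ with $\Loc_{\mathcal D}(A,\vartheta)$ for the constant map. Here one should note that the localizer is computed inside the semigroup $\mathcal D$, so ``$S$'' in the definition is replaced by $\mathcal D$, and the constant value $\Gamma = \mathcal D$ is the whole ambient semigroup. With that, the corollary is the special case of Proposition~\ref{prop:key-property-of-localizers} in the same way that Corollary~\ref{cor:key-property-of-left-localizers} is the special case of Proposition~\ref{prop:key-property-of-left-localizers}.
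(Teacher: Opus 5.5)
Your proposal is correct and matches the paper's proof, which simply says the corollary follows from Proposition~\ref{prop:key-property-of-localizers} by taking $\vartheta$ constant with value $\mathcal D$. You have written out the routine verifications that the paper leaves to the reader: the identification $\Loc_{\mathcal D}(A,\mathcal D)=\Loc_{\mathcal D}(A)$, and the hereditary, union-closed, reflexive and monotone conditions on $\vartheta$.
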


\begin{proof}
This follows directly from Proposition \ref{prop:key-property-of-localizers} and our definitions (we omit further details).
\end{proof}

In particular, Corollaries~\ref{cor:localizer-and-inclusion-hereditary}, \ref{cor:key-property-of-left-localizers}, and \ref{cor:key-property-of-localizers} apply when $\mathcal D$ is either $\mathcal P(S)$ or $\mathcal P_\fin(S)$. Propositions~\ref{lem:localizer-and-inclusion-hereditary}, \ref{prop:key-property-of-left-localizers}, and \ref{prop:key-property-of-localizers} also apply to the same structures when $\vartheta(X) = 2^X \setminus \{\emptyset\}$ for every $X \in \mathcal D$ (the empty set must be excluded, since
$\emptyset \notin \mathcal D$). 
These results provide another effective approach to proving that an isomorphism between power semigroups maps one-element idempotents to one-element idempotents (cf.~Remark~\ref{remark:two-gauge-functions}), as we will see in the proof of Theorem~\ref{thm:kobayashi-theorem}.

The final lemma in this section, though not needed for the paper's main results, establishes a uniqueness property that may be regarded as a natural complement to Proposition~\ref{prop:key-property-of-localizers}.

\begin{lemma}
\label{lem:uniqueness-of-localizing-idempotent}
Let $S$ be a semigroup, $\mathcal D$ be a subsemigroup of
$\mathcal P(S)$, and $\vartheta$ be a function $\mathcal D \to 2^{\mathcal D}$. For every $A \in \mathcal E(\mathcal D)$, there
exists at most one idempotent $e \in \mathcal E(S)$ such that
\begin{equation}
\label{lem:uniqueness-of-localizing-idempotent:eq(1)}
\{e\}\in\Loc_{\mathcal D}(A, \vartheta)
   \subseteq \{e\}\mathcal D\{e\}.
\end{equation}
\end{lemma}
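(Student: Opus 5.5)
Suppose $e, f \in \EE(S)$ both satisfy Eq.~\eqref{lem:uniqueness-of-localizing-idempotent:eq(1)}. The plan is to show first that $e$ and $f$ each absorb the other on both sides, and then that this forces $e = f$.

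For the first step, apply the inclusion for $e$ to the singleton $\{f\}$. Since $\{f\} \in \Loc_{\mathcal D}(A,\vartheta) \subseteq \{e\}\mathcal D\{e\}$, there is some $X \in \mathcal D$ with $\{f\} = eXe$. Then
\[
e\{f\} = e(eXe) = eXe = \{f\},
\]
because $e^2 = e$, and likewise $\{f\}e = \{f\}$. Hence $ef = fe = f$. Exchanging the roles of $e$ and $f$ gives $fe = ef = e$. Combining the two, $e = ef = f$.

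No step here is a genuine obstacle. The only care needed is to note that membership in $\Loc_{\mathcal D}(A,\vartheta)$ is used solely through the inclusion $\Loc_{\mathcal D}(A,\vartheta) \subseteq \{e\}\mathcal D\{e\}$, which holds for each of the two candidates. In particular, no hypothesis on $\vartheta$ is required, since the uniqueness part of Definition~\ref{def:localizers} is never invoked, and the idempotence of $A$ is not used either. The argument is exactly the trick from the proof of Proposition~\ref{prop:key-property-of-localizers}, with $\{e\}$ in place of $E$: it uses only that $\{e\}\{e\} = \{e\}$ and that set multiplication is associative in $\mathcal P(S)$.
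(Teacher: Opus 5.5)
Your proof is correct and is essentially the paper's argument. Both use only that each singleton lies in the localizer, hence in the other candidate's family $\{e\}\mathcal D\{e\}$, together with $e^2=e$. The one cosmetic difference is that you extract the one-sided absorptions $ef=fe=f$ and $fe=ef=e$ directly, whereas the paper derives the sandwich identities $uvu=v$ and $vuv=u$ and then computes $u=u(vuv)=(uvu)v=v$.
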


\begin{proof}
Let $u, v \in\EE(S)$, and suppose that Eq.~\eqref{lem:uniqueness-of-localizing-idempotent:eq(1)} is satisfied both for $e = u$ and for $e = v$. Then
\[
\{v\}\in\Loc_{\mathcal D}(A, \vartheta)
   \subseteq\{u\}\mathcal D\{u\}
\quad\text{and}\quad
\{u\}\in\Loc_{\mathcal D}(A, \vartheta)
   \subseteq\{v\}\mathcal D\{v\},
\]
and hence $\{v\} = uXu$ and $\{u\} = vYv$ for some $X, Y \in \mathcal D$. By the idempotency of $u$, it follows that
\[
\{uvu\} =\{u\}\{v\}\{u\}
 =\{u\}(uXu)\{u\}
 =\{u\}X\{u\}
 =\{v\},
\]
that is, $uvu = v$. Symmetrically, $vuv=u$. Thus, 
\[
u=u^2=u(vuv)=(uvu)v=v^2=v, 
\]
which is enough to finish the proof.
\end{proof} 

\section{Kobayashi's problem for inverse semigroups: the infinitary case}
\label{sec:inverse-sgrps}

In this section, we give an affirmative answer to Problem~\ref{ques:kobayashi-problem} for the class of inverse semigroups (see Theorem~\ref{thm:kobayashi-theorem}). To this end, we will need a few auxiliary results to supplement
those of Section~\ref{sec:localizers}.

\begin{lemma}
\label{lem:localizers-in-power-semigroups-of-semilattices}
Let $S$ be an inverse semigroup. For every $e \in \EE(S)$, there exists $A \in \EE(\mathcal P(S))$ such that
\begin{equation}
\label{lem:localizers-in-power-semigroups-of-semilattices:eq(0)}
\{e\} \in\Loc_{\mathcal P(S)}(A)
   \subseteq \{e\} \mathcal P(S) \{e\}.
\end{equation}
\end{lemma}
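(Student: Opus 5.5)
The plan is to take for $A$ the up-set of $e$ in the semilattice of idempotents:
\[
A := \{f \in \EE(S) : ef = fe = e\}.
\]
Everything then reduces to elementary facts about the natural partial order on $S$.

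First, I check that $A \in \EE(\mathcal P(S))$. Since $\EE(S)$ is a commutative subsemigroup, for $f, g \in A$ we have $fg \in \EE(S)$ and $(fg)e = f(ge) = fe = e$. Hence $A^2 \subseteq A$. Conversely, $f = ff \in A^2$ for each $f \in A$, so $A^2 = A$. Also $e \in A$, so $A$ is non-empty.

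Next, I show that $\{e\} \in \Loc_{\mathcal P(S)}(A)$. Clearly $A\{e\} = \{fe : f \in A\} = \{e\}$, and symmetrically $\{e\}A = \{e\}$. For uniqueness, let $X \in \mathcal P(S)$ satisfy $AX = \{e\}$, and fix $x \in X$.
\begin{itemize}
\item Since $e \in A$, we get $ex = e$, so $e \le x$ in the natural partial order.
\item By the standard compatibility of the natural order with inverses, $e = ee^{-1} \le xx^{-1}$. One checks this directly: $ex = e$ gives $x^{-1}e = e$ after inversion, hence $e\,xx^{-1} = e$.
\item So $f := xx^{-1}$ is an idempotent with $fe = ef = e$, that is, $f \in A$.
\item Therefore $x = xx^{-1}x = fx \in AX = \{e\}$, and so $x = e$.
\end{itemize}
This gives $X = \{e\}$. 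The right-handed version is symmetric, using $x^{-1}x$ in place of $xx^{-1}$. I expect this step to be the main obstacle: $S$ need not have an identity, so one must find an element of $A$ that fixes $x$, and $xx^{-1}$ plays that role.

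Finally, I prove the inclusion $\Loc_{\mathcal P(S)}(A) \subseteq \{e\}\mathcal P(S)\{e\}$. Let $Y \in \Loc_{\mathcal P(S)}(A)$. Apply Proposition~\ref{prop:localizer-and-inclusion} with $\mathcal D = \mathcal P(S)$ and $\vartheta$ constant with value $\mathcal P(S)$, which is hereditary. Taking $B = \{e\} \subseteq A$ yields $eY = Y$, and the left-right dual yields $Ye = Y$. Hence $Y = eYe \in \{e\}\mathcal P(S)\{e\}$, which establishes Eq.~\eqref{lem:localizers-in-power-semigroups-of-semilattices:eq(0)}.
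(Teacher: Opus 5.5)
Your proof is correct and follows the paper's argument essentially step for step. You use the same set $A$ (your extra condition $fe=e$ is automatic, since idempotents commute), the same trick of showing $xx^{-1}\in A$ so that $x = xx^{-1}x \in AX = \{e\}$, and the same appeal to Proposition~\ref{prop:localizer-and-inclusion} and its dual to get $Y = eYe$. The one compressed step is the derivation of $e\,xx^{-1} = e$: it needs both $ex = e$ and $x^{-1}e = e$ together, via $e = e^2 = (ex)(x^{-1}e) = e\,xx^{-1}e = e\,xx^{-1}$, which is exactly the paper's computation.
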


\begin{proof}
Fix $e \in \EE(S)$. The idempotents of an inverse semigroup commute \cite[Theorem 5.1.1]{Ho95}. Hence
\[
(ab)^2 = abab = a^2 b^2 = ab, 
\qquad\text{for all }
a, b \in \EE(S). 
\]
Consequently, $\EE(S)$ is a semilattice under the operation inherited from $S$, with the result that
\begin{equation}
\label{lem:localizers-in-power-semigroups-of-semilattices:eq(2)}
X = \{x^2: x \in X\} \subseteq X^2,
\qquad\text{for every } X \subseteq \EE(S).
\end{equation}

Set $A := \{a \in \EE(S): ea = e\}$. Obviously, $e \in A$. Moreover, 
\[
e(ab) = (ea)b = eb = e,
\qquad\text{for all }
a, b \in A.
\]
Thus $A^2\subseteq A$, while
Eq.~\eqref{lem:localizers-in-power-semigroups-of-semilattices:eq(2)}
gives $A\subseteq A^2$. Therefore, $A^2=A$, and hence $A \in \EE(\mathcal P(S))$.

We claim that $\{e\} \in \Loc_{\mathcal P(S)}(A)$. By definition, this amounts to showing that $\{e\}$ is the only solution $Y \in \allowbreak \mathcal P(S)$ of each of the equations $AY = \{e\}$ and $YA = \{e\}$. Below we shall focus on the first of these two equations, as the second can be treated similarly.

To begin, it is clear from the above that $Ae = \{e\}$. Suppose, on the other hand, that $AY = \{e\}$ for some $Y \in \mathcal P(S)$; our task is to prove that $Y = \{e\}$. Since $e \in A$, we have 
\begin{equation}
\label{lem:localizers-in-power-semigroups-of-semilattices:eq(3)}
\emptyset \ne eY \subseteq AY = \{e\}.
\end{equation}
Now let $y \in Y$. It is a standard fact that the product of an element $a \in S$ by its inverse is idempotent
\cite[Theorem~2.3.4(1)]{Ho95}; in particular,
$f_y := yy^{-1} \in \EE(S)$. In addition, every idempotent is its own inverse in
$S$, and \cite[Proposition~5.1.2(1)]{Ho95} gives $
(ab)^{-1} = b^{-1}a^{-1}$ for every $a, b \in S$.
Since $ey = e$ by Eq.~\eqref{lem:localizers-in-power-semigroups-of-semilattices:eq(3)} and, as noted earlier, $uv = vu$ for all $u, v \in \EE(S)$, we obtain 
\[
e = ee^{-1} = (ey)(ey)^{-1} = (ey)(y^{-1}e^{-1})
  = ef_ye = e^2 f_y = ef_y.
\]
Thus $f_y \in A$, and consequently
\[
y = yy^{-1}y = f_yy \in AY = \{e\}.
\]
It follows, as desired, that $Y = \{e\}$, because $y$ was an arbitrary element in $Y$. 

A left-right dual argument likewise shows that $YA = \{e\}$ yields $Y = \{e\}$, so it remains to check that 
\[
\Loc_{\mathcal P(S)}(A) \subseteq \{e\} \mathcal P(S) \{e\}. 
\]
To this end, let $X \in \Loc_{\mathcal P(S)}(A)$. Since $e \in A$, we are guaranteed by Proposition~\ref{prop:localizer-and-inclusion} and its left-right dual that $X = \allowbreak eX = \allowbreak Xe$. Therefore $X = eXe \in \{e\} \mathcal P(S) \{e\}$, and we are done. 
\end{proof}

Before proceeding, we work out an example demonstrating that
the inclusion in Eq.~\eqref{lem:localizers-in-power-semigroups-of-semilattices:eq(0)} cannot, in general, be replaced by equality. 

\begin{example}
Let $S = \{0,e,f\}$ be the three-element semilattice determined by the Cayley table
\[
\begin{array}{c|ccc}
\cdot & 0 & e & f \\ \hline
0     & 0 & 0 & 0 \\
e     & 0 & e & 0 \\
f     & 0 & 0 & f
\end{array}
\]

Suppose, by way of contradiction, that there exists $A \in \mathcal P(S)$ such that $
\Loc_{\mathcal P(S)}(A) = \{e\}\mathcal P(S)$. 
Since $\{e\} \in \{e\} \mathcal P(S)$, it is then clear that $Ae = \{e\}$. However, $e$ is the only element $a \in S$ with the property that $ae = e$. It follows that
$A = \allowbreak \{e\}$. On the other hand, direct verification gives
\[
\Loc_{\mathcal P(S)}(\{e\}) = \bigl\{\{e\}\bigr\}
\quad\text{and}\quad
\{e\}\mathcal P(S) \{e\} = \{e\}\mathcal P(S) = \mathcal P(\{0,e\})
 =\bigl\{\{0\},\{e\},\{0,e\}\bigr\},
\]
which is a contradiction. Therefore, $
\Loc_{\mathcal P(S)}(A)\ne\{e\}\mathcal P(S)$ for all $A\in\mathcal P(S)$.
\end{example}

Next we record an observation that, while elementary, will be essential in the subsequent proofs.

\begin{remark}
\label{lem:singleton-from-sandwich-product}
Let $S$ be a semigroup, and suppose that $ABA = A$ for some $A, B \in \mathcal P(S)$. If both $AB$ and $BA$ are singletons, then picking $a \in A$ and $b \in B$ yields $Ab = \{ab\}$ and $BA = \{ba\}$. Therefore, $A$ is itself a singleton, as we find that $A = A(BA) = A\{ba\} = (Ab)a = \{aba\}$.
\end{remark}

More precisely, we use this observation below to show that a homomorphism between power semigroups
that maps one-element idempotents to one-element idempotents
restricts to a homomorphism between the base semigroups (in the sense that it sends singletons to singletons),
provided that the underlying semigroup of the domain is regular. We recall that a semigroup $S$ is regular if,
for every $a \in S$, there is at least one element $b \in S$ such that $aba = a$.

\begin{proposition}
\label{prop:singleton-idempotents-and-regularity}
Let $H$ be a regular semigroup, $\mathcal D$ be a subsemigroup of $\mathcal P(H)$ containing every one-element subset of $H$, and $K$ be an arbitrary semigroup. If $\Phi$ is a hom\-o\-mor\-phism $\mathcal D \to \mathcal P(K)$ such that $\Phi(\{e\})$ is a singleton for every $e \in \EE(H)$, then there exists a homomorphism $g \colon H \to K$ such that 
\begin{equation}
\label{prop:singleton-idempotents-and-regularity:eq(0)}
\Phi(\{x\}) = \{g(x)\},
\qquad \text{for all } x \in H.
\end{equation}
\end{proposition}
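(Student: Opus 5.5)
The plan is to show that $\Phi(\{x\})$ is a singleton for every $x \in H$, using Remark~\ref{lem:singleton-from-sandwich-product}. Once this is done, we define $g(x)$ as the unique element of $\Phi(\{x\})$. Then $g$ is a homomorphism, because $\Phi$ is one and $\{x\}\{y\} = \{xy\}$:
\[
\{g(xy)\} = \Phi(\{xy\}) = \Phi(\{x\})\Phi(\{y\}) = \{g(x)g(y)\}.
\]

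So fix $x \in H$. By regularity, pick $y \in H$ with $xyx = x$. Note that $\{y\} \in \mathcal D$, since $\mathcal D$ contains all singletons. Set
\[
A := \Phi(\{x\}) \quad\text{and}\quad B := \Phi(\{y\}).
\]
These are elements of $\mathcal P(K)$, so they are non-empty. Since $\Phi$ is a homomorphism, applying it to $\{x\}\{y\}\{x\} = \{x\}$ gives $ABA = A$.

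Next we check that $AB$ and $BA$ are singletons. The elements $xy$ and $yx$ are idempotents of $H$, because
\[
(xy)^2 = (xyx)y = xy \quad\text{and}\quad (yx)^2 = y(xyx) = yx.
\]
Hence, by hypothesis,
\[
AB = \Phi(\{x\})\Phi(\{y\}) = \Phi(\{xy\})
\]
is a singleton, and likewise $BA = \Phi(\{yx\})$ is a singleton. Remark~\ref{lem:singleton-from-sandwich-product} then shows that $A$ is a singleton, as required.

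There is no real obstacle here. The only points needing care are that all sets involved lie in $\mathcal D$, which is guaranteed because $\mathcal D$ contains all singletons, and that the singleton images under $\Phi$ are non-empty, which holds because $\Phi$ takes values in $\mathcal P(K)$.
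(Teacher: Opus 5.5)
Your proposal is correct and follows the paper's proof essentially verbatim: regularity gives $y$ with $xyx=x$, the idempotents $xy$ and $yx$ force $AB$ and $BA$ to be singletons, and Remark~\ref{lem:singleton-from-sandwich-product} applied to $ABA=A$ makes $A=\Phi(\{x\})$ a singleton. As in the paper, $g$ is then defined pointwise, and it is a homomorphism because $\Phi$ is one.
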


\begin{proof}
Let $a \in H$. We first prove that $A:=\Phi(\{a\})$ is a
singleton. Since $H$ is regular, there exists $b \in H$ such
that $a=aba$. Setting $B:=\Phi(\{b\})$, we find that
\begin{equation}
\label{prop:singleton-idempotents-and-regularity:eq(1)}
ABA = \Phi(\{a\}) \Phi(\{b\}) \Phi(\{a\}) = \Phi(\{aba\}) = \Phi(\{a\}) = A,
\end{equation}
and in a similar fashion we obtain 
\begin{equation*}
\label{prop:singleton-idempotents-and-regularity:eq(2)}
AB = \Phi(\{ab\})
\quad\text{and}\quad
BA = \Phi(\{ba\}).
\end{equation*}

On the other hand, we have $(ab)^2 = (aba)b = ab$ and likewise $(ba)^2 = ba$, so $ab$ and $ba$ are idempotents of $H$. Thus, by hypothesis and the last display, $AB$ and $BA$ are singletons. Remark~\ref{lem:singleton-from-sandwich-product}, together with Eq.~\eqref{prop:singleton-idempotents-and-regularity:eq(1)}, therefore implies that $A$ is also a singleton, as desired.

It follows that there exists a function $g \colon H \to K$ such that Eq.~\eqref{prop:singleton-idempotents-and-regularity:eq(0)} holds. Since $\Phi$ is a homomorphism, it is then immediate to check that so is $g$.
\end{proof}

We are finally in a position to prove the main result of the section (Corollary \ref{cor:kobayashi-theorem}) and give an affirmative solution to Problem~\ref{ques:kobayashi-problem} for the class of inverse semigroups.

\begin{theorem}
\label{thm:kobayashi-theorem}
Let $H$ be an inverse semigroup, $K$ be an arbitrary semigroup, and $\Phi$ be an isomorphism from $\mathcal P(H)$ to $\mathcal P(K)$. There then exists an injective homomorphism $g \colon H \to K$ such that
\[
\Phi(\{x\}) = \{ g(x) \},
\qquad\text{for all }
x \in H.
\]
\end{theorem}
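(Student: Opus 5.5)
By Proposition~\ref{prop:singleton-idempotents-and-regularity}, applied with $\mathcal D = \mathcal P(H)$ (inverse semigroups are regular), it suffices to show that $\Phi(\{e\})$ is a singleton for every $e \in \EE(H)$. Once this is known, we get a homomorphism $g \colon H \to K$ with $\Phi(\{x\}) = \{g(x)\}$. Injectivity of $g$ is then immediate from the injectivity of $\Phi$: $g(x) = g(y)$ gives $\Phi(\{x\}) = \Phi(\{y\})$, hence $\{x\} = \{y\}$.

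To handle the idempotents, fix $e \in \EE(H)$. Lemma~\ref{lem:localizers-in-power-semigroups-of-semilattices} supplies $A \in \EE(\mathcal P(H))$ with
\[
\{e\} \in \Loc_{\mathcal P(H)}(A) \subseteq \{e\}\mathcal P(H)\{e\}.
\]
Transport this through $\Phi$. Put $E := \Phi(\{e\})$ and $A' := \Phi(A)$. Since $\Phi$ is an isomorphism, $A' \in \EE(\mathcal P(K))$. Since $\{e\}$ is idempotent, $E^2 = E$, so in particular $E^2 \subseteq E$. Proposition~\ref{prop:isos-preserve-localizers}, with $\vartheta$ constant equal to $\mathcal P(H)$ (whose transport $\vartheta^\Phi$ is constant equal to $\mathcal P(K)$), gives $\Phi[\Loc_{\mathcal P(H)}(A)] = \Loc_{\mathcal P(K)}(A')$. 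Moreover,
\[
\Phi[\{e\}\mathcal P(H)\{e\}] = E\,\Phi[\mathcal P(H)]\,E = E\,\mathcal P(K)\,E,
\]
using that $\Phi$ is a surjective homomorphism. Hence
\[
E \in \Loc_{\mathcal P(K)}(A') \subseteq E\,\mathcal P(K)\,E.
\]

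Now $\mathcal P(K)$ is a hereditary, union-closed subsemigroup of itself, so Corollary~\ref{cor:key-property-of-localizers} applies with $S = K$ and $\mathcal D = \mathcal P(K)$. It yields $E = \{f\}$ for some $f \in \EE(K)$, which is exactly what is needed. The only points requiring care are the bookkeeping in the transport step: that the constant gauge $\mathcal P(H)$ is carried to the constant gauge $\mathcal P(K)$ by $\Phi$, and that the sandwich set $\{e\}\mathcal D\{e\}$ is carried to $E\,\mathcal D'\,E$. Both follow from the bijectivity and multiplicativity of $\Phi$, so the main work has already been done in Lemma~\ref{lem:localizers-in-power-semigroups-of-semilattices} and Proposition~\ref{prop:key-property-of-localizers}.
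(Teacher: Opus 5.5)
Your proposal is correct and follows the paper's proof essentially step for step. You obtain $A$ from Lemma~\ref{lem:localizers-in-power-semigroups-of-semilattices}, transport it through $\Phi$ via Proposition~\ref{prop:isos-preserve-localizers}, use Corollary~\ref{cor:key-property-of-localizers} to force $\Phi(\{e\})$ to be a one-element idempotent, and conclude with Proposition~\ref{prop:singleton-idempotents-and-regularity}; your explicit check that the constant gauge and the family $\{e\}\mathcal P(H)\{e\}$ are carried to $\mathcal P(K)$ and $E\,\mathcal P(K)\,E$ fills in a step the paper leaves implicit.
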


\begin{proof}
Let $e_H \in \EE(H)$. By Lemma \ref{lem:localizers-in-power-semigroups-of-semilattices}, there is an idempotent $A \in \mathcal E(\mathcal P(H))$ such that
\[
\{e_H\} \in \Loc_{\mathcal P(H)}(A) \subseteq \{e_H\} \mathcal P(H) \{e_H\}.
\]
Set $E := \Phi(\{e_H\})$ and $B := \Phi(A)$.  We have from the last display and Proposition \ref{prop:isos-preserve-localizers} that
\begin{equation}
\label{thm:kobayashi-theorem:eq(0)}
E \in \Loc_{\mathcal P(K)}(B) \subseteq E \mathcal P(K) E.
\end{equation}
Moreover, homomorphisms preserve idempotency. So, each of $B$ and $E$ is an idempotent of $\mathcal P(K)$, and we conclude from Eq.~\eqref{thm:kobayashi-theorem:eq(0)} and Corollary~\ref{cor:key-property-of-localizers} that $E = \{e_K\}$ for some $e_K \in \EE(K)$. 

It follows that $\Phi$ sends one-element idempotents of $\mathcal P(H)$
to one-element idempotents of $\mathcal P(K)$.
Since $H$ is regular,
Proposition~\ref{prop:singleton-idempotents-and-regularity}
then yields a homomorphism $g \colon H \to K$
such that $\Phi(\{x\}) = \{g(x)\}$ for every $x \in H$, and
the injectivity of $g$ is straightforward from that of $\Phi$.
\end{proof}

\begin{corollary}
\label{cor:kobayashi-theorem}
If $H$ and $K$ are inverse semigroups, then every isomorphism from $\mathcal P(H)$ to $\mathcal P(K)$ maps the one-element subsets of $H$ bijectively onto the one-element subsets of $K$.
\end{corollary}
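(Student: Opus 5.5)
The plan is to apply Theorem~\ref{thm:kobayashi-theorem} twice, once to $\Phi$ and once to $\Phi^{-1}$, and then show that the two induced maps on singletons are mutually inverse.

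Let $\Phi \colon \mathcal P(H) \to \mathcal P(K)$ be an isomorphism, where $H$ and $K$ are both inverse semigroups. Since $H$ is inverse, Theorem~\ref{thm:kobayashi-theorem} gives an injective homomorphism $g \colon H \to K$ with $\Phi(\{x\}) = \{g(x)\}$ for every $x \in H$. In particular, $\Phi$ maps the one-element subsets of $H$ injectively into the one-element subsets of $K$.

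Next, $\Phi^{-1} \colon \mathcal P(K) \to \mathcal P(H)$ is also an isomorphism, and $K$ is an inverse semigroup. Applying Theorem~\ref{thm:kobayashi-theorem} with the roles of $H$ and $K$ swapped yields an injective homomorphism $h \colon K \to H$ with $\Phi^{-1}(\{y\}) = \{h(y)\}$ for all $y \in K$.

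It remains to prove surjectivity onto the singletons of $K$. Fix $y \in K$. Then
\[
\{y\} = \Phi\bigl(\Phi^{-1}(\{y\})\bigr) = \Phi(\{h(y)\}) = \{g(h(y))\}.
\]
So every one-element subset of $K$ is the image under $\Phi$ of a one-element subset of $H$. Hence $\Phi$ restricts to a bijection between the one-element subsets of $H$ and those of $K$. Moreover $g \circ h = \mathrm{id}_K$, and by the symmetric computation $h \circ g = \mathrm{id}_H$, so $g$ is an isomorphism $H \to K$.

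All the substantive work lies in Theorem~\ref{thm:kobayashi-theorem}. The one point that needs care is that its hypothesis concerns only the domain semigroup, so applying it to $\Phi^{-1}$ genuinely requires $K$ to be inverse. That is exactly where the hypothesis on $K$ enters.
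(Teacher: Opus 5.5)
Your proposal is correct and follows essentially the same route as the paper: apply Theorem~\ref{thm:kobayashi-theorem} to $\Phi$ and to $\Phi^{-1}$ (using that $K$ is inverse), then use $\{y\} = \Phi(\Phi^{-1}(\{y\})) = \{g(h(y))\}$ to get surjectivity onto the singletons of $K$. Your additional remark that $h \circ g = \mathrm{id}_H$, so that $g$ is an isomorphism, is a harmless and valid extra.
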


\begin{proof}
By Theorem~\ref{thm:kobayashi-theorem}, there exists an injective homomorphism $g \colon H \to K$ such that
\[
\Phi(\{x\}) = \{g(x)\},
\qquad\text{for every } x \in H.
\]
Since $K$ is also an inverse semigroup and the functional inverse $\Phi^{-1}$ of $\Phi$ is an isomorphism from $\mathcal P(K)$ to $\mathcal P(H)$, another application of Theorem~\ref{thm:kobayashi-theorem} yields an
injective homomorphism $f \colon K \to H$ such that
\[
\Phi^{-1}(\{y\}) = \{f(y)\},
\qquad\text{for all } y \in K.
\]
Consequently, we obtain
\[
\{y\}
 = \Phi \circ \Phi^{-1}(\{y\})
 = \Phi(\{f(y)\})
 = \{g \circ f(y)\},
\qquad\text{for each } y \in K.
\]
This proves that $g \circ f$ is the identity map on $K$, and hence $g$ is also surjective. It follows that $\Phi$ maps the one-element subsets of $H$ bijectively onto the one-element
subsets of $K$.
\end{proof}

\begin{corollary}
Two inverse semigroups are globally isomorphic (that is, their large power semigroups are isomorphic) if and only if they are isomorphic.
\end{corollary}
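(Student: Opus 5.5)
Both directions of the equivalence are short, and I will treat them separately.

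For the ``if'' direction, let $\phi \colon H \to K$ be an isomorphism of inverse semigroups. Then its augmentation $\phi^\ast \colon \mathcal P(H) \to \mathcal P(K)$, $X \mapsto \phi[X]$, is an isomorphism, as noted in the introduction. Hence $H$ and $K$ are globally isomorphic.

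For the ``only if'' direction, suppose $\Phi \colon \mathcal P(H) \to \mathcal P(K)$ is an isomorphism, with $H$ and $K$ inverse semigroups. Theorem~\ref{thm:kobayashi-theorem} gives an injective homomorphism $g \colon H \to K$ with $\Phi(\{x\}) = \{g(x)\}$ for all $x \in H$. By the argument in the proof of Corollary~\ref{cor:kobayashi-theorem}, which applies Theorem~\ref{thm:kobayashi-theorem} to $\Phi^{-1}$ using that $K$ is also inverse, $g$ is surjective as well. So $g$ is a bijective homomorphism, that is, an isomorphism $H \to K$. Equivalently, Corollary~\ref{cor:kobayashi-theorem} shows that $\Phi$ restricts to a bijection between singletons, and the induced map $x \mapsto$ the unique element of $\Phi(\{x\})$ is multiplicative because $\{xy\} = \{x\}\{y\}$.

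There is no real obstacle here: all the substance is in Theorem~\ref{thm:kobayashi-theorem}. The only point to check is that surjectivity of $g$ genuinely requires $K$ to be inverse, which is why the statement concerns two inverse semigroups.
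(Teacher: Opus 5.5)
Your proof is correct and follows the paper's argument: the nontrivial direction comes from Corollary~\ref{cor:kobayashi-theorem} (that is, Theorem~\ref{thm:kobayashi-theorem} applied to both $\Phi$ and $\Phi^{-1}$), and the converse comes from the augmentation $\phi^\ast$. One small quibble: the argument uses that $K$ is inverse to obtain surjectivity of $g$, but nothing here shows that this hypothesis is \emph{genuinely required}, so your closing remark should be softened to say only that the proof uses it.
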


\begin{proof}
The forward implication is immediate from
Corollary~\ref{cor:kobayashi-theorem}. For the converse, the augmentation of any semigroup isomorphism is an isomorphism between the corresponding large power semigroups, as already noted in Section~\ref{sec:intro}.
\end{proof} 
\section{Kobayashi's problem for inverse semigroups: the finitary case}
\label{sec:kobayashi-finitary}

In this section, we answer
Problem~\ref{ques:finitary-problems}\ref{ques:finitary-problems(ii)} in the affirmative for the class of inverse
semigroups (see Corollary~\ref{cor:finitary-kobayashi-for-inverse-semigroups}). We begin with a lemma with a vague topological flavor.

\begin{lemma}
\label{lem:finite-separation}
Let $S$ be an inverse semigroup, and fix an idempotent $e \in \EE(S)$ and a finite subfamily $\Gamma$ of
$\mathcal P_\fin(S)$ with $\{e\} \in \Gamma$. There then exists a non-empty finite set $A \subseteq \EE(S)$ such that
\[
e \in A = A^2
\quad\text{and}\quad
\{e\} \in \lLoc_{\mathcal P_\fin(S)}(A, \Gamma).
\]
\end{lemma}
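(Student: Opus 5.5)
Since $\Gamma$ is a family of subsets of $S$, the relevant localizer is $\lLoc_{\mathcal P_\fin(S)}(A,\Gamma)$ with constant gauge $\Gamma$. So we must produce a finite $A\subseteq \EE(S)$ such that $e\in A=A^2$, $A\{e\}=\{e\}$, and $AY\ne\{e\}$ for every $Y\in\Gamma$ with $Y\ne\{e\}$. The plan is to mimic the construction in the proof of Lemma~\ref{lem:localizers-in-power-semigroups-of-semilattices}, but truncated to a finite set. There the relevant idempotents were the elements of $\{a\in\EE(S): ea=e\}$, and only the idempotents $f_y=yy^{-1}$ for $y$ in a competing set $Y$ were used. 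Accordingly, set
\[
F:=\{\,yy^{-1} : y\in Y \text{ for some } Y\in\Gamma,\ e\,yy^{-1}=e\,\}\cup\{e\},
\]
which is finite because $\Gamma$ is finite and consists of finite sets. Let $A$ be the subsemigroup of $\EE(S)$ generated by $F$.

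First we check that $A$ has the required structural properties. Since $\EE(S)$ is a semilattice (idempotents commute in an inverse semigroup), the subsemigroup generated by the finite set $F$ consists of products of distinct elements of $F$, so it is finite. It satisfies $A^2\subseteq A$, and $A\subseteq A^2$ by Eq.~\eqref{lem:localizers-in-power-semigroups-of-semilattices:eq(2)}; hence $A=A^2$, and $e\in A$. Every generator $a$ satisfies $ea=e$, so every product $a$ of generators also satisfies $ea=e$, i.e.\ $ae=e$ by commutativity. Consequently $A\{e\}=\{e\}$, and $\{e\}\in\Gamma$ by hypothesis.

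For uniqueness, suppose $Y\in\Gamma$ satisfies $AY=\{e\}$. Since $e\in A$, we get $ey=e$ for all $y\in Y$. Exactly as in the proof of Lemma~\ref{lem:localizers-in-power-semigroups-of-semilattices}, this gives $e=(ey)(ey)^{-1}=e\,yy^{-1}$. Thus $f_y:=yy^{-1}$ lies in $F\subseteq A$, because $y\in Y\in\Gamma$. Then $y=f_yy\in AY=\{e\}$, so $y=e$, and since $y\in Y$ was arbitrary, $Y=\{e\}$. This proves $\{e\}\in\lLoc_{\mathcal P_\fin(S)}(A,\Gamma)$.

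The main point needing care is designing $F$ so that it captures every $f_y$ that the uniqueness argument requires while keeping $A$ finite. Including only those $yy^{-1}$ with $e\,yy^{-1}=e$ handles the first requirement, and closing under products in the semilattice $\EE(S)$ preserves both finiteness and the property $ea=e$, which gives the second.
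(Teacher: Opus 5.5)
Your proof is correct and follows essentially the paper's approach: both take $A$ to be the finite subsemilattice generated by $e$ and finitely many idempotents $yy^{-1}$ with $e\,yy^{-1}=e$. Both then separate $\{e\}$ from the other members $Y$ of $\Gamma$ through the same dichotomy: either $ey\ne e$ with $ey\in AY$, or $y=yy^{-1}y\in AY$. The only difference is that the paper picks a single witness $a_X\ne e$ in each $X\in\Gamma\setminus\{\{e\}\}$, with a case split on whether $ea_X=e$. You instead include $yy^{-1}$ for every eligible $y$, which makes the construction choice-free and lets the uniqueness argument of Lemma~\ref{lem:localizers-in-power-semigroups-of-semilattices} carry over verbatim.
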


\begin{proof}
Fix a set $X \in \Gamma$. If $X = \{e\}$, put $p_X := e$. Otherwise, $X \setminus \{e\}$ is non-empty, so pick $a_X \in \allowbreak X$ with $a_X \ne e$, and let $p_X$ be equal to $e$ or to $a_X a_X^{-1}$ according as $ea_X \ne e$ or $ea_X = e$. 

As already mentioned in the proof of Lemma~\ref{lem:localizers-in-power-semigroups-of-semilattices}, the product of an element $a \in S$ by its inverse $a^{-1}$ is idempotent
\cite[Theorem~2.3.4(1)]{Ho95}, while  \cite[Proposition~5.1.2(1)]{Ho95} yields $(ab)^{-1} = b^{-1}a^{-1}$ for every $a, b \in S$. Moreover, every idempotent is its own inverse in $S$. We claim that
\begin{equation}
\label{lem:finite-separation:eq(1)}
p_X \in \EE(S)
\quad\text{and}\quad
p_X e = e.
\end{equation}

This is obvious if $p_X = e$. Otherwise, $p_X = a_X a_X^{-1}$ and $e a_X = e$, so we conclude from the above remarks that $p_X$ is idempotent and, since $uv = vu$ for all $u, v \in \EE(S)$,
\[
e = e^{-1} = (ea_X)(ea_X)^{-1}
  = ea_X a_X^{-1}e^{-1} = ep_Xe = p_X e^2 = p_X e.
\]

Let $A$ be the subsemigroup of $S$ generated by the set 
\[
E := \{p_X : X \in \Gamma\} \subseteq S.
\]
Since $E$ is a finite subset of $\EE(S)$ and any two idempotents of $S$ commute (as noted earlier), every $a \in A$ is an idempotent and a finite product of pairwise distinct elements of $E$. By Eq.~\eqref{lem:finite-separation:eq(1)}, this implies that $ae = e$ and hence $Ae = \{e\}$. Moreover, $A$ is
a non-empty finite subset of $\EE(S)$ with $e \in A$ and
$A^2 = A$ (recall that $\{e\} \in \Gamma$ and hence $e = p_{\{e\}} \in E$).

Finally, if $X \in \Gamma$ and $X \ne \{e\}$, then
$p_X a_X \in AX$. However, $p_X a_X \ne e$, and hence
$AX \ne \{e\}$. Indeed, if $e a_X \ne e$, then $p_X = e$, so
$p_X a_X = e a_X \ne e$; otherwise, $p_X = a_X a_X^{-1}$, and so 
\[
p_X a_X = a_X a_X^{-1} a_X = a_X \ne e.
\]
Together with $Ae = \{e\}$ and $\{e\} \in \Gamma$, this suffices to show that
$\{e\} \in \lLoc_{\mathcal P_\fin(S)}(A, \Gamma)$.
\end{proof}

\begin{theorem}
\label{thm:finitary-kobayashi-theorem}
Let $H$ be an inverse semigroup, $K$ be an arbitrary semigroup, and $\Phi$ be an isomorphism from $\mathcal P_\fin(H)$ to $\mathcal P_\fin(K)$. There then exists an injective homomorphism $f \colon H \to K$ such that
\begin{equation}
\label{thm:finitary-kobayashi-theorem:eq(1)}
\Phi(\{x\}) = \{ f(x) \},
\qquad\text{for all }
x \in H.
\end{equation}
\end{theorem}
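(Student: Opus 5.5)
The plan is to show that $\Phi$ sends every one-element idempotent $\{e\}$, $e \in \EE(H)$, to a one-element set. Proposition~\ref{prop:singleton-idempotents-and-regularity}, applied with $\mathcal D = \mathcal P_\fin(H)$ (which contains all singletons) and the regular semigroup $H$, then gives a homomorphism $f$ satisfying Eq.~\eqref{thm:finitary-kobayashi-theorem:eq(1)}, and $f$ is injective because $\Phi$ is. The infinitary argument of Theorem~\ref{thm:kobayashi-theorem} cannot be copied directly. In $\mathcal P_\fin(H)$ there is in general no finite idempotent $A$ with $\{e\} \in \Loc_{\mathcal P_\fin(H)}(A)$ (the set $\{a \in \EE(H) : ea = e\}$ need not be finite). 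So I would work with localizers relative to a \emph{finite} gauge family, as suggested by Remark~\ref{remark:two-gauge-functions}, and use Lemma~\ref{lem:finite-separation} to produce the idempotent.

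Concretely, fix $e \in \EE(H)$ and put $E := \Phi(\{e\})$, a finite idempotent of $\mathcal P_\fin(K)$. Since $E$ is finite, the family $\mathcal W := 2^E \setminus \{\emptyset\}$ of its non-empty subsets is finite, hereditary and union-closed, and $E \in \mathcal W$. Let $\Gamma := \Phi^{-1}[\mathcal W]$, a finite subfamily of $\mathcal P_\fin(H)$ containing $\{e\}$. Lemma~\ref{lem:finite-separation}, together with its left-right dual, yields finite idempotent sets $A_\ell$ and $A_r$. I would let $A$ be the subsemigroup generated by the union of the two generating sets $\{p_X\}$. The proof of the lemma goes through verbatim: $A$ is a finite set of idempotents fixing $e$, and $p_X a_X \in AX$ (respectively $a_X p_X \in XA$). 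Hence $\{e\} \in \Loc_{\mathcal P_\fin(H)}(A, \Gamma)$ with $A = A^2$.

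Put $B := \Phi(A) \in \EE(\mathcal P_\fin(K))$. Proposition~\ref{prop:isos-preserve-localizers}, applied with the constant gauge $\Gamma$, gives $E \in \Loc_{\mathcal P_\fin(K)}(B, \mathcal W)$. By Eq.~\eqref{remark:two-gauge-functions:eq(1)}, this means $E \in \Loc_{\mathcal P_\fin(K)}(B, \vartheta)$ for the gauge $\vartheta(X) := 2^X \setminus \{\emptyset\}$, because $\vartheta(E) = \mathcal W$. This $\vartheta$ is monotone and each of its values is hereditary and union-closed, so Proposition~\ref{lem:localizer-and-inclusion-hereditary} applies. It gives $\{y\} \in \Loc_{\mathcal P_\fin(K)}(B, \vartheta)$ for every $y \in E$, and Proposition~\ref{prop:localizer-and-inclusion} (and its dual) gives $BY = YB = Y$ for all non-empty $Y \subseteq E$. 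The goal is then to invoke Proposition~\ref{prop:key-property-of-localizers}, which would force $E$ to be a singleton. The only missing hypothesis is
\[
\Loc_{\mathcal P_\fin(K)}(B,\vartheta) \subseteq E\,\mathcal P_\fin(K)\,E .
\]
Actually I only need this for the singletons $\{y\}$ with $y \in E$, since that is all the proof of Proposition~\ref{prop:key-property-of-localizers} uses.

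This last inclusion is the step I expect to be the main obstacle. Transported back to $H$, it asks that every $X \in \Gamma$ with $AX = XA = X$ satisfy $X = eXe$; then $\Phi(X) = E\,\Phi(X)\,E$. In the infinitary case this came for free from Proposition~\ref{prop:localizer-and-inclusion}, because $e \in A$ and the gauge there is hereditary in $H$. Here the gauge $\Gamma$ is hereditary only on the $K$-side. My first attempt would be to enlarge the generating set of $A$ once more. For each of the finitely many $X \in \Gamma$ with $X \ne eXe$, I would add an idempotent that destroys the equation $AX = X$ or $XA = X$. For instance, if some $x \in X$ has $ex \ne x$, the new idempotent should make $e$ act nontrivially on $X$, exploiting that $A$ is a finite semilattice all of whose elements fix $e$. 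Failing that, I would try to use the finiteness of $E$ to argue directly in $K$: from $\{y\} = B\{y\}$ and $E^2 = E$, I would aim to show $\{y\} = E\{y\}E$, which is all that is needed.
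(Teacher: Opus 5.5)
There is a genuine gap, and it is exactly the step you flag: you never show that $E\{y\}=\{y\}=\{y\}E$ for $y\in E$, and neither fallback delivers it as stated. Below, $\mathcal Q(V)$ denotes the family of non-empty subsets of $V$.

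Arguing only in $K$ cannot work from the facts you have collected. Take $K=\{0,1\}$ under $\min$, $B=\{1\}$ and $E=\{0,1\}$. Then $E^2=E$ and $BZ=ZB=Z$ for all $Z$, so $E\in\Loc_{\mathcal P_\fin(K)}(B,\mathcal Q(E))$, and everything you extract from Propositions~\ref{prop:localizer-and-inclusion} and~\ref{lem:localizer-and-inclusion-hereditary} holds. Yet $E\{1\}=E\ne\{1\}$.

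Enlarging $A$ fails in general too. In the chain $0<e<1$ under $\min$ with $\Gamma=\{\{e\},\{1\},\{e,1\}\}$, the uniqueness requirement forces $A=\{e,1\}$. This $A$ fixes $\{e,1\}$ on both sides, although $e\{e,1\}e=\{e\}$.

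What must be imported from $H$ is the fact $e\in A$. The natural way to exploit it is Proposition~\ref{prop:localizer-and-inclusion} in $\mathcal P_\fin(H)$. That requires $X_y:=\Phi^{-1}(\{y\})$ to lie in a left localizer of $A$ relative to a gauge containing \emph{every} non-empty subset of $X_y$. Your family $\mathcal W=\mathcal Q(E)$ is too small for this, since $\Phi(Z)$ need not be contained in $E$ for $Z\subseteq X_y$. Moreover, descending from $E$ to $\{y\}$ with the gauge $\vartheta(Y)=\mathcal Q(Y)$ discards all uniqueness. Indeed $\vartheta(\{y\})=\{\{y\}\}$, so $\{y\}\in\Loc(B,\vartheta)$ merely says $B\{y\}=\{y\}B=\{y\}$.

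The paper repairs both points.
\begin{enumerate}
\item With $X_a:=\Phi^{-1}(\{a\})$ for $a\in E$, it takes the finite set $W:=\bigcup_{a\in E}\bigcup_{\emptyset\ne Y\subseteq X_a}\Phi(Y)\supseteq E$ and puts $\Gamma:=\Phi^{-1}[\mathcal Q(W)]$.
\item It gets $A$ from Lemma~\ref{lem:finite-separation} and transports to $E\in\lLoc_{\mathcal P_\fin(K)}(B,\mathcal Q(W))$.
\item It uses the gauge $\omega(Y):=\mathcal Q(W\cup Y)$, which is monotone, has hereditary union-closed values, and equals $\mathcal Q(W)$ on subsets of $W$. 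By Eq.~\eqref{remark:two-gauge-functions:eq(1)} and Proposition~\ref{lem:localizer-and-inclusion-hereditary}, this gives $\{a\}\in\lLoc_{\mathcal P_\fin(K)}(B,\mathcal Q(W))$ with full uniqueness.
\item It shrinks the gauge to $\Phi[\mathcal Q(X_a)]$ via Eq.~\eqref{remark:two-gauge-functions:eq(2)} and pulls back by Proposition~\ref{prop:isos-preserve-localizers}, obtaining $X_a\in\lLoc_{\mathcal P_\fin(H)}(A,\mathcal Q(X_a))$.
\item Proposition~\ref{prop:localizer-and-inclusion} and $e\in A$ then yield $\{e\}X_a=X_a$, i.e.\ $E\{a\}=\{a\}$. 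Dually $\{a\}E=\{a\}$, so $E$ is a singleton at once, without Proposition~\ref{prop:key-property-of-localizers}.
\end{enumerate}

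The rest of your outline is fine: the two-sided $A$ built from both halves of Lemma~\ref{lem:finite-separation}, the transport step, and the final appeal to Proposition~\ref{prop:singleton-idempotents-and-regularity}.
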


\begin{proof}
Fix $e \in \EE(H)$. The set $E := \Phi(\{e\})$ is an idempotent of $\mathcal P_\fin(K)$, because homomorphisms are idempotence-preserving functions. Our goal is to show that $E$ is also a singleton. Put 
\[
X_a := \Phi^{-1}(\{a\}) \in \mathcal P_\fin(H),
\qquad\text{for each }
a \in E. 
\]
Since a finite union of finite sets is finite, it is clear that
\[
W := \bigcup_{a \in E}\ \bigcup_{\emptyset \ne Y \subseteq X_a} \Phi(Y) \subseteq K
\]
is an element of $\mathcal P_\fin(K)$. In addition, we have
\begin{equation}
\label{thm:finitary-kobayashi-theorem:eq(inclusion)}
E = \bigcup_{a \in E} \{a\} = \bigcup_{a \in E} \Phi(X_a) \subseteq W.
\end{equation}

For brevity's sake, denote by $\mathcal Q(V)$ the family of all non-empty subsets of a set $V$. We have from the above that $\Gamma := \Phi^{-1}[\mathcal Q(W)]$ is a finite subfamily of $\mathcal P_\fin(H)$ containing $\{e\}$. Thus Lemma~\ref{lem:finite-separation} (applied with $S = H$) guarantees the existence of a non-empty finite set $A \subseteq \EE(H)$ such that
\begin{equation}
\label{thm:finitary-kobayashi-theorem:eq(2)}
e \in A = A^2 
\quad\text{and}\quad
\{e\} \in \lLoc_{\mathcal P_\fin(H)}(A, \Gamma).
\end{equation}
Taking $B := \Phi(A)$, it follows by Eq.~\eqref{thm:finitary-kobayashi-theorem:eq(2)} and Proposition \ref{prop:isos-preserve-localizers} (applied with $S = \mathcal P_\fin(H)$, $T = \mathcal P_\fin(K)$, and $\vartheta(X) := \Gamma$ for every non-empty finite $X \subseteq H$) that
\begin{equation}
\label{thm:finitary-kobayashi-theorem:eq(3)}
E \in \lLoc_{\mathcal P_\fin(K)}(B, \mathcal Q(W)).
\end{equation}

Next, let $\omega$ be the map
\[
\mathcal P_\fin(K) \to 2^{\mathcal P_\fin(K)} \colon Y \mapsto \mathcal Q(W \cup Y). 
\]
Each value of $\omega$ is a hereditary, union-closed family, and it is easy to check that $\omega$ is monotone under inclusion. Moreover, every $Y \in \mathcal P_\fin(K)$ is also an element of $\omega(Y)$, and if $Y \subseteq W$ then $\omega(Y) = \allowbreak \mathcal Q(W)$. Hence Proposition~\ref{lem:localizer-and-inclusion-hereditary} and the first display in Remark~\ref{remark:two-gauge-functions} (both applied with $S = K$, $\mathcal D = \allowbreak \mathcal P_\fin(K)$, and $\vartheta = \omega$), along with Eqs.~\eqref{thm:finitary-kobayashi-theorem:eq(inclusion)} and \eqref{thm:finitary-kobayashi-theorem:eq(3)}, imply that
\begin{equation}
\label{thm:finitary-kobayashi-theorem:eq(4)}
\{a\} \in \lLoc_{\mathcal P_\fin(K)}(B, \mathcal Q(W)),
\qquad\text{for all }
a \in E.
\end{equation}

Now fix $a \in E$, set $\Delta_a := \Phi[\mathcal Q(X_a)]$, and let $\delta_a$ be the constant map
\[
\mathcal P_\fin(K) \to 2^{\mathcal P_\fin(K)}: Y \mapsto \Delta_a.
\]
Since $\Phi(X_a) = \{a\}$, the definition of $W$ guarantees that 
$\{a\} \in \allowbreak \Delta_a \subseteq \mathcal Q(W)$.
Consequently, we get from Eq.~\eqref{thm:finitary-kobayashi-theorem:eq(4)} and the second display in Remark~\ref{remark:two-gauge-functions} (applied with $S = \mathcal P_\fin(K)$ and $\vartheta = \delta_a$) that
\[
\{a\} \in \lLoc_{\mathcal P_\fin(K)}(B, \Delta_a),
\]
which, by Proposition \ref{prop:isos-preserve-localizers} (applied with $S = \mathcal P_\fin(K)$, $T = \mathcal P_\fin(H)$, $\phi = \Phi^{-1}$, and $\vartheta = \delta_a$), 
yields
\[
X_a \in \lLoc_{\mathcal P_\fin(H)}(A, \mathcal Q(X_a)).
\]
The family $\mathcal Q(X_a)$ is hereditary. So, applying Proposition~\ref{prop:localizer-and-inclusion} (with $S = H$, $\mathcal D = \mathcal P_\fin(H)$, and $\vartheta(X) := \mathcal Q(X_a)$ for every $X \in \mathcal D$) and recalling from Eq.~\eqref{thm:finitary-kobayashi-theorem:eq(2)} that $e \in A$, we obtain $
\{e\}X_a = X_a$, whence 
\[
E\{a\} = \Phi(\{e\} X_a) = \Phi(X_a) = \{a\}.
\]

In fact, the preceding arguments and all the results used in them
(from the propositions and remarks of Section~\ref{sec:localizers}
to Lemma~\ref{lem:finite-separation}) admit left-right duals.
By symmetry, we therefore obtain
$\{a\} = E\{a\} = \{a\}E$.
Since $a \in E$ was arbitrary, it follows that
$x = xy = yx = y$ for all $x, y \in E$.
Thus, $E$ is a singleton.

To sum up, we have established that $\Phi$ maps one-element idempotents to one-element idempotents. Thus Proposition~\ref{prop:singleton-idempotents-and-regularity}, together with the regularity of $H$, gives a homomorphism $f \colon H \to K$ satisfying Eq.~\eqref{thm:finitary-kobayashi-theorem:eq(1)}, and the injectivity of $f$ is a direct consequence of that of $\Phi$ (cf.~the proof of Theorem \ref{thm:kobayashi-theorem}).
\end{proof}

\begin{corollary}
\label{cor:finitary-kobayashi-for-inverse-semigroups}
If $H$ and $K$ are inverse semigroups, then every isomorphism from $\mathcal P_\fin(H)$ to $\mathcal P_\fin(K)$ maps the one-element subsets of $H$ bijectively onto the one-element subsets of $K$.
\end{corollary}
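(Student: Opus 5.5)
The plan mirrors the proof of Corollary~\ref{cor:kobayashi-theorem}, with Theorem~\ref{thm:finitary-kobayashi-theorem} in place of Theorem~\ref{thm:kobayashi-theorem}. Let $\Phi \colon \mathcal P_\fin(H) \to \mathcal P_\fin(K)$ be an isomorphism, where $H$ and $K$ are inverse semigroups.

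First, since $H$ is an inverse semigroup and $K$ is a semigroup, Theorem~\ref{thm:finitary-kobayashi-theorem} gives an injective homomorphism $f \colon H \to K$ with $\Phi(\{x\}) = \{f(x)\}$ for all $x \in H$. Next, $\Phi^{-1}$ is an isomorphism $\mathcal P_\fin(K) \to \mathcal P_\fin(H)$, and $K$ is also an inverse semigroup. Applying the same theorem with the roles of $H$ and $K$ swapped yields an injective homomorphism $h \colon K \to H$ with $\Phi^{-1}(\{y\}) = \{h(y)\}$ for all $y \in K$.

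Then, for each $y \in K$,
\[
\{y\} = \Phi\bigl(\Phi^{-1}(\{y\})\bigr) = \Phi(\{h(y)\}) = \{f(h(y))\},
\]
so $f \circ h = \mathrm{id}_K$ and $f$ is surjective. Hence $f$ is a bijection. Consequently, $\{x\} \mapsto \Phi(\{x\}) = \{f(x)\}$ maps the one-element subsets of $H$ bijectively onto those of $K$: injectivity comes from the injectivity of $\Phi$ (or of $f$), and surjectivity from that of $f$.

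I expect no real obstacle, since all the substance lies in Theorem~\ref{thm:finitary-kobayashi-theorem}. The only point to check is that the theorem applies symmetrically, which it does because its hypotheses on $K$ are vacuous and $K$ is itself inverse.
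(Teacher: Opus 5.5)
Your proposal is correct and is exactly the paper's argument: the paper proves this corollary by repeating the proof of Corollary~\ref{cor:kobayashi-theorem} with Theorem~\ref{thm:finitary-kobayashi-theorem} in place of Theorem~\ref{thm:kobayashi-theorem}. It applies the theorem to $\Phi$ and to $\Phi^{-1}$ and deduces $f \circ h = \mathrm{id}_K$, as you do.
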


\begin{proof}
Formally identical to the proof of  Corollary~\ref{cor:kobayashi-theorem}, with Theorem~\ref{thm:kobayashi-theorem} replaced by
Theorem~\ref{thm:finitary-kobayashi-theorem}.
\end{proof}

\begin{corollary}
\label{cor:finitary-tamura-shafer-for-inverse-semigroups}
The finitary power semigroups of two inverse semigroups are isomorphic if and only if the semigroups themselves are isomorphic.
\end{corollary}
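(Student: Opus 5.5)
The plan mirrors the proof of the corresponding corollary in the infinitary setting, with Corollary~\ref{cor:finitary-kobayashi-for-inverse-semigroups} now playing the role of Corollary~\ref{cor:kobayashi-theorem}. We prove the two implications separately.

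For the forward implication, let $H$ and $K$ be inverse semigroups and let $\Phi \colon \mathcal P_\fin(H) \to \mathcal P_\fin(K)$ be an isomorphism. By Corollary~\ref{cor:finitary-kobayashi-for-inverse-semigroups}, $\Phi$ maps the one-element subsets of $H$ bijectively onto the one-element subsets of $K$. Theorem~\ref{thm:finitary-kobayashi-theorem} supplies an injective homomorphism $f \colon H \to K$ with $\Phi(\{x\}) = \{f(x)\}$ for all $x \in H$. The bijectivity on singletons then shows that $f$ is surjective: every $\{y\}$ with $y \in K$ equals $\Phi(\{x\}) = \{f(x)\}$ for some $x \in H$. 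Hence $f$ is a bijective homomorphism, that is, an isomorphism $H \to K$.

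For the converse, let $\phi \colon H \to K$ be an isomorphism. As observed in Section~\ref{sec:intro}, its augmentation $X \mapsto \phi[X]$ restricts to an isomorphism $\mathcal P_\fin(H) \to \mathcal P_\fin(K)$, because direct images of non-empty finite sets under a bijection are non-empty and finite. This implication needs neither the inverse hypothesis nor the localizer machinery.

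There is no real obstacle at this stage: all the substantive work lies in Theorem~\ref{thm:finitary-kobayashi-theorem}, and specifically in the step showing that $\Phi(\{e\})$ is a singleton for each idempotent $e$. The only point needing care is surjectivity of $f$. Since that is already packaged in Corollary~\ref{cor:finitary-kobayashi-for-inverse-semigroups}, which is obtained by applying the theorem to $\Phi^{-1}$ and using that $K$ is also inverse, the corollary can be cited directly.
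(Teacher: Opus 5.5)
Your proposal is correct and follows the paper's own proof: the forward implication comes from Corollary~\ref{cor:finitary-kobayashi-for-inverse-semigroups}, together with the injective homomorphism from Theorem~\ref{thm:finitary-kobayashi-theorem}, and the converse comes from restricting the augmentation of an isomorphism to $\mathcal P_\fin$. Your explicit check that $f$ is surjective just spells out what the paper calls ``straightforward.''
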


\begin{proof}
The forward implication is straightforward from
Corollary~\ref{cor:finitary-kobayashi-for-inverse-semigroups}. For the converse, every semigroup
isomorphism $f \colon H \to K$ induces, through its augmentation, an isomorphism between the
corresponding finitary power semigroups, as already mentioned in
Section~\ref{sec:intro}.
\end{proof} 

\section*{Acknowledgments}

The authors were supported by the 111 Center under Grant No.~D26018 and by the Natural Science Foundation of Hebei Province under Grant No.~A2023205045.

\end{document}